\newtheorem{thm}{Theorem}[section]
\newtheorem{lem}[thm]{Lemma}
\newtheorem{prop}[thm]{Proposition}
\theoremstyle{definition}
\newtheorem{defi}[thm]{Definition}
\newtheorem{rmq}[thm]{Remark}
\newcommand{\1}{\mathbbm 1}
\newcommand{\R}{\mathbb R}
\newcommand{\Z}{\mathbb Z}
\renewcommand{\P}{\mathbb P}
\newcommand{\E}{\mathbb E}
\renewcommand{\L}{\mathcal L}
\newcommand{\lap}{\mathcal L}
\newcommand{\A}{\mathcal A}
\newcommand{\alap}{\mathcal A}
\renewcommand{\root}{\mathcal R}
\newcommand{\ie}{{\it i.e. }}
\newcommand{\point}{\!\cdot\!}
\DeclareMathAlphabet{\mathpzc}{OT1}{pzc}{m}{it}
\newcommand{\la}{\langle}
\newcommand{\ra}{\rangle}
\begin{document}

\title{Affine Dunkl Processes}

\author{Fran\c cois Chapon} 
\address{Laboratoire de probabilit\'es et Mod\`eles Al\'eatoires, Universit\'e Paris 6 Pierre et Marie Curie, 4 place Jussieu, 75252 Paris Cedex 05, France}
\email{francois.chapon@upmc.fr}

\keywords{Dunkl processes, diffusion processes, orthogonal polynomials, skew-product decomposition, affine root system, Weyl group}
\renewcommand{\subjclassname}{%
  \textup{2010} Mathematics Subject Classification}
\subjclass[2010]{Primary 60J75; Secondary 60J60, 60B15, 33C52}

\begin{abstract}
We introduce  the analogue of Dunkl processes in the case of an affine root system of type $\widetilde{\text{A}}_1$. The construction of the affine Dunkl process is achieved by a skew-product decomposition by means of its radial part and a jump process on the affine Weyl group, where the radial part of the affine Dunkl process is defined as the unique solution of some stochastic differential equation. We prove that the affine Dunkl process is a c\`adl\`ag Markov process as well as a local martingale, study its jumps, and give a martingale decomposition, which are properties similar to those of the classical Dunkl process. 
\end{abstract} 
\maketitle

\section{Introduction}

The aim of the following is to study the analogue of Dunkl processes in the case of an affine root system of type $\widetilde{\text{A}}_1$ (root systems and Dunkl processes will be properly defined in the sequel). The affine Dunkl process $Y=(Y_t)_{t\geq0}$ with parameter $k$ will be defined as the Markov process in $\R$ with infinitesimal generator given by
\[
\A u(x)=\frac{1}{2}u''(x)+k\pi\cot(\pi x)u'(x)+\frac{k}{2}\sum_{p\in\Z}\frac{u(s_p(x))-u(x)}{(x-p)^2},
\]
acting on $u\in C^2_b(\R)$ and $x\in\R\setminus\Z$, where $k$ is a real number satisfying $k\geq\frac12$ and $s_p$ is the reflection around $p\in\Z$, \ie $s_p(x)=-x+2p$. It is a Markov process with jumps, whose radial part is the continuous Feller process living in the interval $]0,1[$, and with infinitesimal generator 
\[
\L u(x)=\frac{1}{2}u''(x)+k\pi\cot(\pi x)u'(x),
\]
for $u\in C([0,1])\cap C^2(]0,1[)$. The main idea to achieve the construction of the process $Y$ with generator $\A$ is to consider a skew-product decomposition, by starting from its radial part and adding the jumps successively at random times, the jump part of the process being  given by some process on the affine Weyl group associated to the root system $\widetilde{\text{A}}_1$.

In this introduction, we now
recall some facts about root system and Weyl group in the classical case as in the affine case, and some results about Dunkl processes.

\subsection{Root systems and reflection groups}

All these facts  can be found in the book \cite{humphr}. Let $V$ be  a real euclidean  space of finite dimension  endowed with an inner product $\langle \cdot , \cdot \rangle$. For $\alpha\in V$, we denote by $\sigma_\alpha$ the orthogonal reflection associated  to the vector $\alpha$, which writes
\[
\sigma_\alpha(x)=x-2\frac{\la\alpha,x\ra}{\la\alpha,\alpha\ra}\alpha,
\]
for $x\in V$, and by $\mathcal H_\alpha=\{x\in V\, | \, \langle x,\alpha\rangle=0\}$ the hyperplane orthogonal to $\alpha$.
Let $\root^0\subset V$ be a crystallographic root system, which by definition is a finite set which satisfies
\begin{enumerate}
\item $\root^0$ spans $V$,
\item For all $\alpha\in\root^0$, $\sigma_\alpha(\root^0)=\root^0$,
\item For all $\alpha,\beta\in\root^0$, $\langle\alpha^\vee,\beta\rangle\in\Z$,
\end{enumerate}
where elements of $\root^0$ are called \emph{roots} and  the \emph{coroot} $\alpha^\vee$ is defined by $\alpha^\vee=2\frac{\alpha}{\la\alpha,\alpha\ra}$, so  $\langle \alpha^\vee,\alpha\rangle=2$ for all $\alpha\in\root^0$. The \emph{rank} of $\root^0$ is defined as  the dimension of $V$.
The \emph{Weyl group} $W^0$ associated to $\root^0$ is the subgroup of the orthogonal group of $V$, generated by reflections $\{\sigma_\alpha\, |\, \alpha\in\root^0\}$. Note that $W^0$ is a finite group for all root system in $\R^n$. Each root system can be written as a disjoint union $\root^0=\root^0_+\cup(-\root^0_+)$, where $\root^0_+$ and $-\root^0_+$ are separated by a hyperplane $\{x\in V\, |\, \langle \beta , x \rangle =0\}$, where $\beta$ is an arbitrary chosen vector in $V$ with $\beta\not \in \root^0$. $\root^0_+$ is called a positive subsystem. Let
\[
C=\{x\in V |\ \forall\, \alpha\in \root^0_+, \langle \alpha, x\rangle >0\}
\]
be the \emph{Weyl chamber}. We denote by $\overline C$ its closure, and by $\partial C$ its boundary which is a union of hyperplanes $\mathcal H_\alpha$, which are called the \emph{walls} of $C$. We have that $\overline C$ is a fundamental domain for the action of $W^0$ on $V$, \ie 
$W^0$ permutes the chambers of the system, where chamber means any connected component of $V\setminus \bigcup_{\alpha\in\root^0}\mathcal H_\alpha$.

All root systems of $\R^n$ have been classified, see \cite{humphr} for details. Let us just mention for example the $\text{A}_{n-1}$ root system (for $n\geq2$) which is the set 
\[
\{\pm (e_i-e_j)\, |\, 1\leq i<j\leq n\},
\]
where $(e_i)_{1\leq i\leq n}$ is the standard basis of $\R^n$, and $V$ is the orthogonal complement of the vector $e_1+\cdots+e_n$. Thus the rank of $\text{A}_{n-1}$ is $n-1$. The Weyl group in this case is isomorphic to the symmetric group $\mathcal S_n$, and the Weyl chamber is the cone
\[
\{(x_1,\ldots,x_{n-1})\in\R^{n-1}\,  |\, 0<x_1<\cdots < x_{n-1}\}.
\]

We now want to consider not only orthogonal reflections (leaving the origin of $V$ fixed), but also \emph{affine reflections} relative to hyperplanes which do not necessarily pass through the origin. To this end, given a root system $\root^0$, we 
 define the  corresponding \emph{affine root system} as the direct product $\root=\root^0\times\Z$. For  $\lambda=(\alpha,p)\in\root$, with $\alpha\in\root^0$ and $p\in\Z$,  we define for all $x\in V$
\[
\langle\lambda,x\rangle:=\la\alpha,x\ra-p,
\]
and the affine reflection associated to $\lambda$ by
\[
s_\lambda(x)=s_{(\alpha,p)}(x)=x-\la\alpha,x\ra\alpha^\vee+p\alpha^\vee.
\]
The positive affine root system is also defined by
\begin{equation}\label{explicitaffine}
\root_+=\{(\alpha,0)\, |\, \alpha\in\root^0_+\}\cup\{(\alpha,p)\, |\, \alpha\in\root^0, p\leq -1\}.
\end{equation}
The \emph{affine Weyl group} $W$ is the subgroup of the affine group of $V$, generated by affine reflections $\{s_\lambda\, |\, \lambda\in\root\}$. One can show that $W$ is the semidirect product of $W^0$ and the translation group corresponding to the lattice generated by the coroots of $\root^0$. For each $\lambda=(\alpha,p)\in\root$, we define the affine hyperplane associated to $\lambda$ by
\[
\mathcal H_\lambda=\{x\in V\, |\, \la \alpha, x\ra =p\}.
\]
Let $\mathscr{A}$ be the collection of all connected components of $V^\circ:=V\setminus\bigcup_{\lambda\in\root}\mathcal H_\lambda$.  Each element of $\mathscr A$ is called an \emph{alcove}. As for chambers, we single out one particular alcove
\[
\mathcal A_0=\{x\in V\, |\, 0<\la \alpha, x\ra <1 \text{ for all } \alpha\in\root^0_+\},
\]
called the \emph{principal alcove}. Then, we have that the affine Weyl group $W$ permutes the collection $\mathscr A$ of all alcoves transitively, and the principal alcove $\mathcal A_0$ is a fundamental domain for the action of $W$ on $V$.

In the case of the  rank one affine root system, denoted $\widetilde{\text{A}}_1$, this reduces to $\root^0=\{\pm\alpha\}$, $\root^0_+=\{\alpha\}$, and the positive affine root system is given by
\begin{equation}\label{positiveaffinerootdef}
 \root_+=\root^0_+\cup \{(\pm 1,p)\  | \ p\leq -1\}.
 \end{equation}
where we have identified  $\alpha$ with $1$ and $\alpha^\vee$ with $2$, so that $\langle \alpha^\vee,\alpha\rangle=2$. Hence, we will use the notation for affine reflections
\[
s_p(x)=-x+2p,
\]
for $p\in\Z$. The affine Weyl group is then in the $\widetilde{\text{A}}_1$  case isomorphic to the infinite dihedral group, and the principal alcove is just the interval $]0,1[$, so we will use in what follows either alcove or interval to describe an interval of the form $]n,n+1[$ with $n\in\Z$.

\subsection{Dunkl processes}

A good survey of Dunkl operators and processes can be found in the book \cite{livredunkl}. 

From an analytic point of view, the theory was initiated by Dunkl (\cite{dunkldifference}) who studied differential-difference operators defined by
\[
T_\xi u(x)=\partial_\xi u(x)+\sum_{\alpha\in\root_+^0}k(\alpha)\langle \alpha,\xi\rangle \frac{u(x)-u(\sigma_\alpha (x))}{\la x,\alpha\ra},
\]
for $\xi\in V$, where $\partial_\xi$ denotes the directional derivative corresponding to $\xi$, and where $u\in C^1(V)$ and $k$ is a nonnegative multiplicity function invariant by the Weyl group $W^0$ associated with $\root^0$, \ie $k\colon \root^0\to\R_+$ and $k\circ w=k$ for all $w\in W^0$. One of the most important property of these Dunkl operators is the fact that they commute, and this is at the basis of a rich analytic structure related to them.

The Dunkl Laplacian $\L^0$ is defined by 
\[
\L^0=\sum_{i=1}^n T_i^2, 
\]
and has explicit expression given by
\[
\mathcal L^0 u(x)=\frac{1}{2}\Delta u(x)+\sum_{\alpha\in\root^0_+} k(\alpha)\left(\frac{\la \nabla u(x),\alpha\ra}{\la x,\alpha\ra}+\frac{|\alpha|^2}{2}\frac{u(\sigma_\alpha (x))-u(x)}{\la x, \alpha\ra^2 }\right),
\]
acting on $u\in C^2_b(V)$, for $x\in V\setminus\bigcup_{\alpha\in\root^0} \mathcal H_\alpha$, where $C^2_b$ means continuous twice differentiable bounded functions. This generalizes the radial part of the Laplace-Beltrami operator of a Riemannian symmetric space of Euclidian type obtained when $k$ takes only certain values. 

From a probabilistic point of view, the study of Dunkl processes was originated by R\"osler and Voit  in \cite{roslervoit}, and then extensively   studied by Gallardo and Yor in \cite{gallardoyor1, gallardoyor2, gallardoyor3} and Chybiryakov \cite{chyb}. The Dunkl processes are a family of c\`adl\`ag Markov processes with infinitesimal  generator
$\L^0$, and   parameter $k$. Note that by its explicit expression given above, $\L^0$ does not depend of the choice of $\root^0$. Fixing a Weyl chamber $C$, the radial part of the Dunkl process is the projection of the Dunkl process by the canonical projection of $V$ onto the space $V/W^0$ of $W^0$-orbits in $V$, and we  can  identify  $V/W^0$ with $\overline C$ since $\overline C$  is a fundamental domain for the action of $W^0$.  The radial Dunkl process is then a diffusion process  with infinitesimal generator given by
\[
\L^{0,W^0} u(x)=\frac{1}{2}\Delta u(x)+\sum_{\alpha\in\root^0_+} k(\alpha)\frac{\la \nabla u(x),\alpha\ra}{\la x,\alpha\ra},
\]
for $u\in C_0^2(\overline C)$, the set of $C^2$ functions in $C$, continuous on $\overline C$, which vanish on the boundary of $\overline C$, and such that $\la\nabla u(x),\alpha\ra=0$ for $x\in\mathcal H_\alpha$, $\alpha\in\root^0_+$. Note that $\L^{0,W^0}$ is obtained from $\L^0$ acting on functions that are invariant by $W^0$. As an example,   when $k(\alpha)\equiv1$, the radial Dunkl process is the Brownian motion process in a Weyl chamber as studied by Biane, Bougerol and O'Connell \cite{bbo}. One can show that the radial Dunkl process is the unique solution of the following stochastic differential equation
\[
dX^0_t=dB_t+\sum_{\alpha\in\root^0_+}k(\alpha)\alpha\frac{dt}{\la X^0_t,\alpha\ra },
\]
where $(B_t)_{t\geq0}$ is a $n$-dimensional Brownian motion, and $X^0_0\in C$ almost surely. Furthermore, it has been  shown (\cite{livredunkl}) that, when $k(\alpha)\geq\frac{1}{2}$ for all $\alpha\in\root^0$, $X^0$  lives in $C$ almost surely, \ie $X^0$ never touches the walls of the chamber $C$.
 Now, we list some of the main properties of the Dunkl process, which is denoted $(Y^0_t)_{t\geq0}$, all of them can be found in \cite{livredunkl}. First, we see from its explicit expression,  that $\L^0$
decomposes into a continuous part and a jump part driven by the term $\sum_{\alpha\in\root^0_+} k(\alpha)\frac{u(\sigma_\alpha (x))-u(x)}{\la x, \alpha\ra^2 }$. One can show that the number of jumps is almost surely finite in any finite time interval, and when a jump occurs at some time $t$, there is a random reflection $\sigma_\alpha$ such that $Y^0_t=\sigma_\alpha Y^0_{t^-}$, hence $Y^0$ jumps from chamber to chamber. A remarkable property is that the Dunkl process is the first known example of Markov process with jumps which enjoy the time-inversion property like Brownian motion or Bessel processes. Another property, which will be of importance for the next, is the skew-product decomposition of the Dunkl process found in \cite{chyb}. This is a constructive way to define $Y^0$ starting from its radial part, by adding successively jumps in the direction of the roots, see \cite{chyb}  or \cite{livredunkl} for details.

Finally, we also mention that the counterpart of Dunkl processes in the negatively curved setting, which are called Heckman-Opdam processes, is investigated by Schapira in \cite{schapiraheckman}. They are Markov process with jumps, with infinitesimal generator given by
\begin{align*}
\mathcal D f(x)=\frac{1}{2}\Delta f(x) +\sum_{\alpha\in\root^0_+}k(\alpha)&\coth\frac{\la\alpha,x \ra}{2}\partial_\alpha f(x) \\ &+\sum_{\alpha\in\root^0_+} \frac{|\alpha|^2}{4\sinh^2\frac{\la\alpha,x\ra}{2}}\left(f(\sigma_\alpha(x))-f(x)\right),
\end{align*}
for $f\in C^2_b(V)$ and $x\in V\setminus\bigcup_{\alpha\in\root^0}\mathcal H_\alpha$.

\subsection{Heuristics of the affine Dunkl process}

Since the link between the operator $\A$ defined in the introduction and affine root system is not so obvious, we give in this section some little heuristics, without being rigorous.
We have seen in the previous section that the Dunkl process is the Markov process with infinitesimal generator 
\[
\mathcal L^0 u(x)=\frac{1}{2}\Delta u(x)+\sum_{\alpha\in\root^0_+} k(\alpha)\left(\frac{\la \nabla u(x),\alpha\ra}{\la x,\alpha\ra}+\frac{|\alpha|^2}{2}\frac{u(\sigma_\alpha (x))-u(x)}{\la x, \alpha\ra^2 }\right),
\]
acting on $u\in C^2_b(V)$ with $x\in V\setminus\bigcup_{\alpha\in\root^0} \mathcal H_\alpha$, and 
in the case of the rank one root system $\text{A}_1$ the Dunkl Laplacian reduces to 
\[
\mathcal L^0 u(x)=\frac{1}{2}u''(x)+k\frac{u'(x)}{x}+k\frac{u(-x)-u(x)}{2x^2},
\]
where $k$ is a nonnegative real number.
The main idea is then to replace the positive root system $\root^0_+$ by the positive affine root system $\root_+$ given by (\ref{positiveaffinerootdef}),
 and hence to define the affine Dunkl Laplacian in the $\widetilde{\text{A}}_1$ case by
\begin{multline*}
\alap u(x)=\frac12 u''(x)+k\frac{u'(x)}{x}+ k\frac{u(-x)-u(x)}{2x^2} +k\sum_{p\leq -1}\left\{\frac{u'(x)}{x-p}+\frac{u'(x)}{x+p}\right.\\\left.+\frac{u(-x+2p)-u(x)}{2(x-p)^2}+\frac{u(-x-2p)-u(x)}{2(x+p)^2} \right\}.
\end{multline*}
Recall the series expansion 
of the cotangent function 
\[
\pi\cot(\pi x)=\frac{1}{x}+\sum_{n\geq 1}\left(\frac{1}{x+n}+\frac{1}{x-n}\right),
\]
for $x\in\R\setminus\Z$, see \cite{astuce}, which can be written more elegantly 
\[
\pi\cot (\pi x)=\sum_{n\in\Z}\frac{1}{x-n}.
\]
Note that the latter formula is quite dangerous since it is not absolutely convergent, and we have to be cautious with  the summation order. Hence, using the cotangent expansion, the affine Dunkl Laplacian writes
\[
\A u(x) =\frac{1}{2}u''(x)+k\pi\cot(\pi x) u'(x)+\frac{k}{2}\sum_{p\in \Z}\frac{u(s_p(x))-u(x)}{(x-p)^2}.
\]

The following is divided in two parts. In the first one, corresponding to section~\ref{sectionradialaffine}, we define the radial affine Dunkl process as the unique solution of some stochastic differential equation. We calculate its semigroup and study some of its properties. The second part, which is section~\ref{sectionaffinedunkl}, is devoted to the construction of the affine Dunkl process with generator $\A$, using a skew-product decomposition by means of the radial process and a pure jump process on the affine Weyl group. We study its jumps and also give a martingale decomposition.

\section{The radial affine Dunkl process}\label{sectionradialaffine}

\subsection{Definition of the radial process}

In what follows, the parameter $k$ of the affine Dunkl process is a real number satisfying $k\geq\frac{1}{2}$.
We start by studying the radial part of the  affine Dunkl Process, which is the process $(X_t)_{t\geq0}$ solution to the following stochastic differential equation.

\begin{prop}\label{eqradial}
The stochastic differential equation
\[
dX_t=dB_t+k\pi\cot(\pi X_t)dt,
\]
with initial condition $X_0=x\in]0,1[$ a.s., and
where $(B_t)_{t\geq0}$ is a standard brownian motion, admits a unique strong solution.
Furthermore, let $S$ be the first exit time of the interval $]0,1[$, that is
\[
S=\inf\{t\geq0\, |\, X_t=0\text{ or }1\}.
\]
Then  $\P(S=+\infty)=1$ if and only if $k\geq \frac{1}{2}$. 
\end{prop}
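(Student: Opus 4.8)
The plan is to treat the two assertions separately: first obtain local existence and uniqueness away from the boundary, and then classify the boundary behaviour of the resulting diffusion through its scale function.

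For the existence and uniqueness, the drift $b(x)=k\pi\cot(\pi x)$ is real-analytic, hence locally Lipschitz and bounded, on every compact subinterval $[\varepsilon,1-\varepsilon]\subset\,]0,1[$, while the diffusion coefficient is constant. I would therefore run the standard localization argument for stochastic differential equations with locally Lipschitz coefficients: for each integer $n\geq2$ set $S_n=\inf\{t\geq0\,|\,X_t\notin\,]1/n,1-1/n[\,\}$; up to $S_n$ the coefficients may be replaced by globally Lipschitz ones, so one gets a unique strong solution up to $S_n$, and these solutions are consistent as $n$ grows. Letting $S=\lim_n S_n$, which is exactly the first exit time of $]0,1[$, produces a unique strong solution defined on $[0,S[$. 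It then only remains to decide for which $k$ one has $\P(S=+\infty)=1$.

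For the boundary analysis, I would compute the scale function of the diffusion, i.e.\ the solution of $\tfrac12 s''+b\,s'=0$. Since $\int 2k\pi\cot(\pi z)\,dz=2k\log\sin(\pi z)$, the scale density is
\[
p'(x)=\exp\Bigl(-\int^x 2k\pi\cot(\pi z)\,dz\Bigr)=(\sin\pi x)^{-2k}.
\]
Heuristically, near $0$ one has $b(x)\sim k/x$, which is the drift of a Bessel process of dimension $2k+1$, so one already expects the threshold to be $2k+1\geq2$. To make this rigorous I would invoke Feller's test for explosions: with $c=\tfrac12$ and
\[
v(x)=\int_{1/2}^x p'(y)\int_{1/2}^y \frac{2\,dz}{p'(z)}\,dy=\int_{1/2}^x (\sin\pi y)^{-2k}\int_{1/2}^y 2(\sin\pi z)^{2k}\,dz\,dy,
\]
one has $\P(S=+\infty)=1$ if and only if $v(0^+)=v(1^-)=+\infty$, and $\P(S<+\infty)>0$ as soon as $v$ is finite at one endpoint.

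Finally I would estimate $v$ near the endpoints. The whole problem is invariant under $x\mapsto 1-x$, so it suffices to analyse the endpoint $1$. As $y\to1^-$ the inner integral converges to the finite constant $\int_{1/2}^1 2(\sin\pi z)^{2k}\,dz$, while $(\sin\pi y)^{-2k}\sim(\pi(1-y))^{-2k}$; hence the convergence of $v(1^-)$ is governed by $\int^1(1-y)^{-2k}\,dy$, which diverges exactly when $2k\geq1$. Consequently $v(0^+)=v(1^-)=+\infty$ precisely when $k\geq\tfrac12$, and Feller's test yields the stated equivalence, since for $k<\tfrac12$ both integrals are finite and the boundary is reached with positive probability. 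The main subtlety, and the only genuine computation, is the critical value $k=\tfrac12$: there $p'(x)\sim(\pi x)^{-1}$ and the scale function diverges only logarithmically at each end, yet this borderline divergence is still enough to force $v=+\infty$ at both endpoints, which is exactly why the threshold is attained and the interval stays inaccessible. Everything else is classical one-dimensional diffusion theory.
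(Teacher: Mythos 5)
Your proof is correct and reaches the stated equivalence with the same basic toolkit as the paper (scale function and Feller's test from Karatzas--Shreve), but it is organized differently in both halves, so a comparison is worthwhile. For existence and uniqueness the paper does not localize: it asserts that the operator $\tfrac12\tfrac{d^2}{dx^2}+k\pi\cot(\pi x)\tfrac{d}{dx}$ acting on $C([0,1])\cap C^2(]0,1[)$ generates a Feller semigroup on $C([0,1])$ and quotes Ethier--Kurtz, which gives a process defined for all time regardless of $k$. Your localization through the stopping times $S_n$ is more elementary and self-contained, but it only produces the solution on the stochastic interval $[0,S[$; strictly speaking, for $k<\tfrac12$ you have not constructed \emph{the} solution claimed in the proposition until a boundary convention is fixed --- harmless in practice, since the rest of the paper uses only $k\geq\tfrac12$, where your second half shows $S=+\infty$. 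For the boundary classification the paper argues in two cases: for $k\geq\tfrac12$ it uses only the divergence of the scale function, $p(0)=-\infty$ and $p(1)=+\infty$, which already forbids exit, and it reserves Feller's test with $v(x)=\int_c^x(p(x)-p(y))\,m(dy)$ for the case $k<\tfrac12$. You instead run Feller's test uniformly in $k$, after rewriting $v$ by Fubini as $\int_{1/2}^x p'(y)\int_{1/2}^y \frac{2\,dz}{p'(z)}\,dy$; your endpoint asymptotics (the inner integral tends to a nonzero constant, $p'(y)\sim(\pi(1-y))^{-2k}$, comparison with $\int^1(1-y)^{-2k}\,dy$) are accurate, including at the critical value $k=\tfrac12$, and the symmetry reduction $x\mapsto 1-x$ is legitimate. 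The unified treatment is slightly cleaner; the paper's split has the minor advantage that the $k\geq\tfrac12$ direction needs only the scale function and not the speed measure.
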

\begin{proof}
The operator 
\[
\lap=\frac{1}{2}\frac{d^2}{dx^2}+k\pi\cot(\pi x)\frac{d}{dx}
\]
acting on $C([0,1])\cap C^2(]0,1[)$
generates a Feller semigroup on $C([0,1])$, hence the corresponding stochastic differential equation admits a unique strong solution \cite{ek}.  

For the second assertion, 
we use the standard scale function technique as in \cite{kara}. The scale function $p$ is defined by
\[
p(x)=\int_c^x \exp\Big(-2k\int_c^\xi \pi\cot(\pi y)dy\Big)d\xi,
\]
for $x\in]0,1[$. We can choose $c=1/2$ without loss of generality. Then, 
\[
p(x)=\frac1\pi \int_{\pi/2}^{\pi x} \frac{1}{(\sin y)^{2k}}dy.
\]
Since $\frac{1}{(\sin y)^{2k}}\sim\frac{1}{x^{2k}}$ for $x=0$, $p$ diverges for $k\geq\frac12$.
Indeed, we have $p(0)=-\infty$ and $p(1)=+\infty$. This guarantees that for $k\geq\frac12$, $S=+\infty$ a.s., according to \cite{kara}.

Now, suppose that $k<\frac12$. Define the speed measure by
\[
m(dx)=\frac{2}{p'(x)}dx=2(\sin (\pi x))^{2k}dx.
\]
Define also
\[
v(x)=\int_c^x(p(x)-p(y))m(dy).
\]
Then by Feller's test for explosion, see theorem $5.29$ in \cite{kara}, $S=+\infty$ a.s. if and only if $v(0)$ and $v(1)$ equal $\pm\infty$.
Since $p(x)<+\infty$ for all $x\in[0,1]$, we have to look at the finiteness of $\int_c^xp(y)m(dy)$. But,
\begin{align*}
\Big|\int_{1/2}^{x}p(y)m(dy)\Big|&=\Big|\frac{2}{\pi^2}\int_{\pi/2}^{\pi x}dy\int_{\pi/2}^{\pi y}d\xi \frac{1}{(\sin \xi)^{2k}}(\sin y)^{2k}\Big|\\
& \leq \Big|\frac{2}{\pi^2}\int_{\pi/2}^{\pi x}dy\int_{\pi/2}^{\pi y}d\xi \frac{1}{(\sin \xi)^{2k}}\Big|\\
&=\Big|\frac{2}{\pi^2}\int_{\pi/2}^{\pi x}\frac{x-\pi/2}{(\sin\xi)^{2k}}d\xi\Big|.
\end{align*}
Hence, for $k<\frac12$, $\int_{c}^xp(y)dy$ is finite for $x=0$ and $x=1$, and $S<+\infty$ a.s.
\end{proof}

\begin{rmq} \label{eqradialremark}
If we let the starting point live in some interval $]n,n+1[$, with $n\in\Z$, by the periodicity of the cotangent function we get exactly the  same kind of result, that is, the process $X$ lives in $]n,n+1[$ almost surely.
\end{rmq}

Hence, we define
\begin{defi}
The continuous Feller process $(X_t)_{t\geq0}$ with infinitesimal generator
\[
\L u(x)=\frac{1}{2}u''(x)+k\pi\cot(\pi x)u'(x)
\]
acting on $u\in C(\overline I)\cap C^2(I)$, where $I$ is some interval $I=]n,n+1[$, $n\in\Z$, and $X_0\in I$ a.s., is called the 
\emph{radial affine Dunkl process} with parameter $k$.
\end{defi}

Let us mention that for 
$k=1$, the radial affine Dunkl process is the  Brownian motion conditioned to stay in the interval $]0,1[$, also known as the Legendre process (see \cite{revuzyor}). Indeed, in that case, the generator of the process writes
\[
\L u(x)=\frac{1}{2}u''(x) +\frac{h'(x)}{h(x)}u'(x),
\]
where $h(x)=\sin(\pi x)$ is an eigenfunction for the Laplacian $\Delta$, and hence the corresponding process $X$ is a Doob $h$-transform at the bottom of the spectrum of Brownian motion killed when it reached the walls of $]0,1[$. Let us also mention that Brownian motions in alcoves are related to the process of eigenvalues of the Brownian motion with values in the special unitary group $SU(n)$, see \cite{bianepolya}. The last two remarks are analogues of the same kind of properties for the radial Dunkl process in the classical case, see \cite{livredunkl}.

The property of recurrence of the radial affine Dunkl process is the content of the next proposition.
\begin{prop} Let $k\geq\frac12$. 
The radial affine Dunkl process $(X_t)_{t\geq0}$ is recurrent, that is for every $y\in]0,1[$, we have
\[
\P(\exists\ 0\leq t<+\infty, \ X_t=y)=1.
\]
\end{prop}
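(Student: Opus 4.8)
The plan is to exploit the one-dimensional diffusion structure together with the scale function $p$ already computed in the proof of Proposition~\ref{eqradial}, reducing recurrence to the divergence of $p$ at the two endpoints. Recall that
\[
p(x)=\frac1\pi \int_{\pi/2}^{\pi x}\frac{dy}{(\sin y)^{2k}},
\]
that for $k\geq\frac12$ one has $p(0^+)=-\infty$ and $p(1^-)=+\infty$, and that by Proposition~\ref{eqradial} the process $X$ stays in $]0,1[$ for all times. Since $p$ solves $\frac12 p''+k\pi\cot(\pi x)p'=0$ on $]0,1[$, i.e. $\L p=0$, It\^o's formula shows that $M_t:=p(X_t)$ is a continuous local martingale with $dM_t=p'(X_t)\,dB_t$; this is the structural fact I would build on.

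Next I would compute the exit probabilities from compact subintervals. Fix $0<a<b<1$ and let $T_a,T_b$ be the first hitting times of $a$ and $b$. On $[a,b]$ the derivative $p'$ is continuous and strictly positive, so the quadratic variation $\int_0^{t}p'(X_s)^2\,ds$ grows at a rate bounded below as long as $X$ remains in $[a,b]$; hence $T_a\wedge T_b<+\infty$ almost surely. Applying the optional stopping theorem to the bounded martingale $M$ stopped at $T_a\wedge T_b$, a starting point $x\in\,]a,b[$ yields the classical gambler's ruin formula
\[
\P_x(T_b<T_a)=\frac{p(x)-p(a)}{p(b)-p(a)},\qquad \P_x(T_a<T_b)=\frac{p(b)-p(x)}{p(b)-p(a)},
\]
the two probabilities summing to one precisely because $T_a\wedge T_b$ is finite.

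Finally I would pass to the limit at the boundaries. Fix $y\in\,]0,1[$ and condition on $X_0=x$; by symmetry assume $x<y$. For $0<a<x$ the inclusion $\{T_y<T_a\}\subseteq\{T_y<+\infty\}$ gives
\[
\P_x(T_y<+\infty)\geq \P_x(T_y<T_a)=\frac{p(x)-p(a)}{p(y)-p(a)}\xrightarrow[a\to0^+]{}1,
\]
since $p(a)\to-\infty$. The case $x>y$ is identical, letting $b\to1^-$ and using $p(1^-)=+\infty$, while $x=y$ is trivial. Integrating over the law of $X_0$ then yields $\P(\exists\,0\leq t<+\infty,\ X_t=y)=1$ for every $y\in\,]0,1[$, which is the assertion; equivalently, once the divergence of $p$ at both endpoints is in hand, one may simply invoke the recurrence criterion for one-dimensional diffusions in natural scale from \cite{kara}.

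The main obstacle is the only genuinely delicate point: the almost-sure finiteness of the exit time $T_a\wedge T_b$ from a compact subinterval, which is what makes the optional stopping argument rigorous and guarantees $\P_x(T_a<T_b)+\P_x(T_b<T_a)=1$. Everything else is the standard scale-function bookkeeping already essentially carried out in Proposition~\ref{eqradial}. This finiteness can be established either through the quadratic-variation (time-change) description of $M$ sketched above or by directly appealing to Feller's boundary classification in \cite{kara}.
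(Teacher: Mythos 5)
Your argument is correct and follows essentially the same route as the paper: both proofs rest on the divergence of the scale function $p$ at the two endpoints ($p(0^+)=-\infty$, $p(1^-)=+\infty$ for $k\geq\frac12$) established in Proposition~2.1, combined with continuity of paths. The paper simply invokes the standard consequence from \cite{kara} that $\sup_t X_t=1$ and $\inf_t X_t=0$ almost surely, whereas you spell out the underlying gambler's-ruin computation; the content is the same.
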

\begin{proof}
Let $S$ be the first exit time of the interval $]0,1[$, and 
\[
p(x)=\frac1\pi \int_{\pi/2}^{\pi x} \frac{1}{(\sin y)^{2k}}dy,
\]
be the scale function as in the proof of the last proposition. Since $p(0)=-\infty$ and $p(1)=+\infty$ for $k\geq\frac12$,   we have
\[
\P(S=+\infty)=\P\Big(\sup_{t\geq0} X_t=1\Big)=\P\Big(\inf_{t\geq0}X_t=0\Big)=1.
\]
Hence by continuity of the paths of $(X_t)_{t\geq0}$, the result follows.
\end{proof}

From now on, we will denote by $\P_x$ the distribution of the radial affine Dunkl process starting from $x\in\R\setminus\Z$.

\subsection{Semigroup of the radial affine Dunkl process}

Orthogonal polynomials theory (see \cite{schou}) will allow us to make explicit the semigroup of the radial affine Dunkl process. 

First, let us recall some standard facts about Gegenbauer polynomials, which can be found in \cite{magnus} or \cite{szego} for example. Gegenbauer polynomials $G_n^{(k)}$ (which can be expressed in terms of  Jacobi polynomials $P_n^{(k-\frac{1}{2},k-\frac{1}{2})}$) are orthogonal for the weight $(1-x^2)^{k-\frac{1}{2}}\1_{[-1,1]}(x)dx$, \ie
\[
\int_{[-1,1]}G_n^{(k)}(x)G_m^{(k)}(x)(1-x^2)^{k-\frac{1}{2}} dx=\pi(\omega_n^{(k)})^{-1}\delta_{n,m},
\]
for $k>-\frac{1}{2}$, where
\[
\omega_n^{(k)}=\frac{n!(k+n)\Gamma(k)^2}{2^{1-2k}\Gamma(n+2k)}.
\]
The first polynomials are (for $k\not=0$), $G^{(k)}_0(y)=1$, $G^{(k)}_n(y)=2ky$,\dots
They are of the same parity than $n$, and $G^{(k)}_n(-y)=(-1)^nG^{(k)}_n(y)$. The value at 1 is $G^{(k)}_n(1)=\frac{\Gamma(2k+n)}{n!\Gamma(2k)}$. For $k>0$, they admit an explicit expression,  given by
\begin{equation} \label{explicitgegen}
G^{(k)}_n(y)=\frac{1}{\Gamma(k)}\sum_{m=0}^{\lfloor \frac{n}{2}\rfloor} (-1)^m\frac{\Gamma(k+n-m)}{m!(n-2m)!}(2y)^{n-2m}.
\end{equation}
Furthermore, for all $n\geq0$, $G_n^{(k)}$ is the unique polynomial solution of the equation
\[
(1-x^2)f''(x)-(2k+1)xf'(x)+n(n+2k)f(x)=0.
\]

Now, we can state the following proposition, which gives the semigroup of the radial affine Dunkl process.
\begin{prop}
The radial affine Dunkl process $(X_t)_{t\geq0}$, with $X_0\in]0,1[$ a.s., is the Markov process with semigroup on $]0,1[\times]0,1[$ given by
\[
q_t(x,y)dy=\sum_{n\geq0}e^{-\lambda_n t}G_n^{(k)}(\cos\pi x)G_n^{(k)}(\cos\pi y)\omega_n^{(k)}(\sin\pi y)^{2k}dy,
\]
where the $G_n^{(k)}$ are the Gegenbauer polynomials, with eigenvalues  $\lambda_n=\frac{\pi^2}{2}n(n+2k)$, and $\omega_n^{(k)}$ is a  normalization constant given by
\[
\omega_n^{(k)}=\frac{n!(k+n)\Gamma(k)^2}{2^{1-2k}\Gamma(n+2k)}.
\]
\end{prop}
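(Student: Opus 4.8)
The plan is to diagonalize the generator $\L$ by a spectral decomposition in which the Gegenbauer polynomials appear as eigenfunctions. First I would identify the symmetrizing measure of $\L=\frac12\frac{d^2}{dx^2}+k\pi\cot(\pi x)\frac{d}{dx}$ on $]0,1[$, namely $\mu(dx)=(\sin\pi x)^{2k}\,dx$, which is exactly the speed measure already computed in the proof of Proposition~\ref{eqradial}. The key device is the change of variables $z=\cos(\pi x)$, a decreasing bijection from $]0,1[$ onto $]-1,1[$, under which $(\sin\pi x)^{2k}\,dx=\frac1\pi(1-z^2)^{k-\frac12}\,dz$; thus $\mu$ is carried, up to the factor $\pi$, onto the Gegenbauer orthogonality weight $(1-z^2)^{k-\frac12}\,dz$. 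This is the structural coincidence that makes the Gegenbauer polynomials the natural basis.

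Next I would verify that $\phi_n(x):=G_n^{(k)}(\cos\pi x)$ is an eigenfunction. Writing $f(x)=g(\cos\pi x)$ and computing $f'$ and $f''$, a direct substitution gives
\[
\L f(x)=\frac{\pi^2}{2}\Big[(1-z^2)g''(z)-(2k+1)z\,g'(z)\Big],\qquad z=\cos\pi x,
\]
so the Gegenbauer differential equation recalled above yields $\L\phi_n=-\lambda_n\phi_n$ with $\lambda_n=\frac{\pi^2}{2}n(n+2k)$. The normalization follows from the same change of variables, since
\[
\int_0^1 \phi_n(x)^2\,(\sin\pi x)^{2k}\,dx=\frac1\pi\int_{-1}^1 G_n^{(k)}(z)^2(1-z^2)^{k-\frac12}\,dz=\big(\omega_n^{(k)}\big)^{-1}
\]
by the stated orthogonality relation; hence $\{\sqrt{\omega_n^{(k)}}\,\phi_n\}_{n\ge0}$ is orthonormal in $L^2(\mu)$.

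It then remains to assemble these pieces. Completeness of the Gegenbauer polynomials in $L^2((1-z^2)^{k-\frac12}\,dz)$ transfers through the bijection $z=\cos\pi x$ to completeness of $\{\phi_n\}$ in $L^2(]0,1[,\mu)$, so by the spectral theorem the self-adjoint semigroup generated by $\L$ acts as $e^{t\L}f=\sum_n e^{-\lambda_n t}\omega_n^{(k)}\langle f,\phi_n\rangle_{L^2(\mu)}\phi_n$, whose kernel with respect to Lebesgue measure is precisely the claimed $q_t(x,y)$. Absolute and uniform convergence for $t>0$ is routine: one has $|G_n^{(k)}(z)|\le G_n^{(k)}(1)=\frac{\Gamma(2k+n)}{n!\,\Gamma(2k)}$ on $[-1,1]$, and both this value and $\omega_n^{(k)}$ grow only polynomially in $n$, so they are dominated by the Gaussian-type decay $e^{-\lambda_n t}$ with $\lambda_n\sim\frac{\pi^2}{2}n^2$; this also licenses term-by-term differentiation and the check that $q_t$ solves Kolmogorov's equations.

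The main obstacle is not the computation but the functional-analytic identification: one must ensure that the self-adjoint operator being diagonalized is exactly the generator of the Feller process of Proposition~\ref{eqradial}, that is, that no boundary condition is silently imposed at $0$ or $1$. For $k\ge\frac12$ the scale function satisfies $p(0)=-\infty$ and $p(1)=+\infty$, so both endpoints are non-exit (inaccessible) boundaries; this is Weyl's limit-point case, in which $\L$ is essentially self-adjoint on smooth functions compactly supported in $]0,1[$ and admits a unique self-adjoint extension, necessarily the one generating the process. Alternatively, one may bypass spectral theory by checking directly that the series $q_t$ defines a Markov transition kernel solving the backward equation $\partial_t q_t=\L_x q_t$ with $q_0(x,\cdot)=\delta_x$, and then invoking uniqueness of the Feller semigroup determined by $\L$.
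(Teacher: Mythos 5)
Your computational core --- the change of variables $z=\cos\pi x$ carrying the speed measure $(\sin\pi x)^{2k}dx$ onto the Gegenbauer weight, the verification that $\phi_n(x)=G_n^{(k)}(\cos\pi x)$ satisfies $\L\phi_n=-\lambda_n\phi_n$, and the normalization $\int_0^1\phi_n^2\,d\mu=(\omega_n^{(k)})^{-1}$ --- is exactly the paper's argument; the paper then delegates the assembly of the spectral series into the transition density to \cite{schou}, whereas you spell it out.

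There is, however, one genuinely incorrect step in your functional-analytic identification. The implication ``$p(0)=-\infty$ and $p(1)=+\infty$, hence both endpoints are inaccessible, hence we are in Weyl's limit-point case and $\L$ is essentially self-adjoint on $C_c^\infty(]0,1[)$'' is false. Inaccessibility of a boundary does not imply the limit-point case: near $x=0$ the drift behaves like $k/x$, and conjugating by $\mu^{1/2}$ turns $\L$ into $\frac12\frac{d^2}{dx^2}-\frac{k(k-1)}{2x^2}+O(1)$, which is limit point at $0$ only for $k\geq\frac32$ (or $k\leq-\frac12$). So precisely in the range $\frac12\leq k<\frac32$ --- which contains the cases $k=\frac12$ and the Legendre case $k=1$ emphasized in the paper --- both endpoints are limit circle, $\L\restriction C_c^\infty(]0,1[)$ has deficiency indices $(2,2)$, and there are many self-adjoint extensions. (Compare the three-dimensional Bessel process: the origin is polar, yet $\frac12\frac{d^2}{dx^2}+\frac1x\frac{d}{dx}$ on $C_c^\infty(]0,\infty[)$ is not essentially self-adjoint in $L^2(x^2dx)$.) The gap is easily repaired without any limit-point/limit-circle discussion: each $\phi_n$ is smooth on all of $[0,1]$ and $\L\phi_n=-\lambda_n\phi_n$ extends continuously to $[0,1]$, so $\phi_n$ lies in the domain $C([0,1])\cap C^2(]0,1[)$ of the Feller generator of Proposition~\ref{eqradial}; hence $\frac{d}{dt}P_t\phi_n=P_t\L\phi_n=-\lambda_nP_t\phi_n$, giving $P_t\phi_n=e^{-\lambda_nt}\phi_n$ directly, and completeness of $\{\phi_n\}$ in $L^2(\mu)$ then forces $P_t$ to coincide with the claimed kernel. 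Your alternative route (checking that the series solves the backward equation and invoking uniqueness of the Feller semigroup) is also sound; it is the limit-point justification specifically that should be deleted.
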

\begin{proof}
Since for all $n\geq0$, $G_n^{(k)}$ is the unique polynomial solution of the equation
\[
(1-x^2)f''(x)-(2k+1)xf'(x)+n(n+2k)f(x)=0,
\]
we have that $G_n^{(k)}(\cos(\pi x))$ is solution of the equation
\[
\frac{1}{2}g''(x)+k\pi\cot (\pi x) g'(x)=-\frac{\pi^2}{2}n(n+2k) g(x),
\]
for $x\in[0,1]$. The weight associated is then the measure $\pi(\sin \pi x)^{2k}\1_{[0,1]}dx$, and by \cite{schou}, we obtain that the semigroup associated to the diffusion of generator $\L=\frac{1}{2}\frac{d^2}{dx^2}+k\pi\cot(\pi x)\frac{d}{dx}$, is given by
\[
q_t(x,y)dy=\sum_{n\geq0}e^{-\lambda_n t}G_n^{(k)}(\cos\pi x)G_n^{(k)}(\cos\pi y)\omega_n^{(k)}(\sin\pi y)^{2k}dy. \qedhere
\]
\end{proof}

\subsection{Some properties of the radial process}

As we will see, an important functional of the radial affine Dunkl process, is the continuous process $\frac{1}{\sin^2(\pi X_\cdot)}$. First, note that since $X$ is continuous and never reaches (for $k\geq\frac{1}{2}$) the walls of the alcove where is started from, we have that for all $t\geq0$, there exists some random $\varepsilon_t>0$ such that
\[
\inf_{s\in[0,t]} \sin^2(\pi X_s)>\varepsilon_t, \quad \text{ $\P_x$-a.s.}
\]
Hence, we obtain that for all $t\geq 0$,
\[
\int_0^t \frac{ds}{\sin^2(\pi X_s)}< +\infty, \quad \text{ $\P_x$-a.s.}
\]
Note also that, since $\sin^2(\pi X_s)<1$ for all $s\geq0$, we have that $\int_0^t\frac{ds}{\sin^2(\pi X_s)}\geq t$, so
\[
\int_0^t \frac{ds}{\sin^2(\pi X_s)} \underset{t\to+\infty}{\longrightarrow}+\infty, \quad \text{ $\P_x$-a.s.} 
\]

To study some properties of  the radial affine Dunkl process and more particularly of the last functional, we will need a few lemmas. Let us introduce some standard notations. The binomial coefficient is denoted $\binom{n}{k}=\frac{n!}{k!(n-k)!}$, for all integers $n$ and $k$, the rising factorial (also known as Pochhammer's symbol) is defined by
\[
(\alpha)_n=\alpha(\alpha+1)\cdots(\alpha+n-1)=\frac{\Gamma(\alpha+n)}{\Gamma(\alpha)},
\]
for real $\alpha$, where $\Gamma$ is the usual Gamma function, and the falling factorial is defined by
\[
[\alpha]_n=\alpha(\alpha-1)\cdots(\alpha-n+1),
\]
with $(\alpha)_n=[\alpha]_n=0$ by convention. 
The relation between rising and falling factorials  is given by $(\alpha)_n=[\alpha+n-1]_n$. We have the following  classical lemma.
\begin{lem}\label{lemmaiwa}
For all $\alpha\in\R$, and all $n\geq0$, we have
\[
\sum_{l=0}^n (-1)^l \binom{n}{l}[\alpha+l]_i=0,
\]
for  $i=0,\ldots,n-1$.
\end{lem}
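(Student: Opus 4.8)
The plan is to recognize the sum as, up to sign, an iterated finite difference of a polynomial, and then to invoke the elementary fact that high-order finite differences annihilate low-degree polynomials. First I would fix $\alpha\in\R$ and an index $i$ with $0\leq i\leq n-1$, and regard the falling factorial as a function of the integer variable $l$: since
\[
[\alpha+l]_i=(\alpha+l)(\alpha+l-1)\cdots(\alpha+l-i+1)
\]
is a product of $i$ factors each linear in $l$ with leading coefficient $1$, it is a polynomial in $l$ of degree exactly $i$, hence of degree at most $n-1<n$.

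Next I would introduce the forward difference operator $\Delta$, acting on functions of $l$ by $\Delta f(l)=f(l+1)-f(l)$, and the shift operator $E$ with $Ef(l)=f(l+1)$, so that $\Delta=E-\mathrm{id}$. Expanding $(E-\mathrm{id})^n$ by the binomial theorem and evaluating at $l=0$ gives the classical expansion
\[
\Delta^n f(0)=\sum_{l=0}^n(-1)^{n-l}\binom{n}{l}f(l),
\]
which may also be checked by a direct induction on $n$. Multiplying by $(-1)^n$ and using $(-1)^{-l}=(-1)^l$, this identifies the quantity we must compute with $(-1)^n\Delta^n f(0)$, where $f(l)=[\alpha+l]_i$.

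Finally I would use that $\Delta$ lowers the degree of any polynomial by exactly one (the top-degree terms cancel in $f(l+1)-f(l)$), so that $\Delta^n$ sends every polynomial of degree strictly less than $n$ to zero. Applied to $f(l)=[\alpha+l]_i$, whose degree is $i\leq n-1$, this yields $\Delta^n f(0)=0$, and therefore $\sum_{l=0}^n(-1)^l\binom{n}{l}[\alpha+l]_i=(-1)^n\Delta^n f(0)=0$, which is the claim. The only point requiring the slightest care is the degree count ensuring $\deg_l[\alpha+l]_i=i<n$; beyond that there is no substantive obstacle, the statement being precisely the assertion that the $n$-th finite difference annihilates polynomials of degree below $n$, here expressed in the basis of falling factorials. (One could equally deduce it from the generating identity $\sum_{l=0}^n(-1)^l\binom{n}{l}x^l=(1-x)^n$ by applying suitable differential operators and using that $(1-x)^n$ vanishes to order $n$ at $x=1$, but the finite-difference argument is the most direct.)
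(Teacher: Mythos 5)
Your proof is correct. It is, however, a genuinely different argument from the one in the paper: there, one considers $u(x)=x^\alpha(1-x)^n$, notes that $\frac{d^i}{dx^i}x^{\alpha+l}=[\alpha+l]_i\,x^{\alpha+l-i}$ so that $u^{(i)}(1)$ equals the sum in question after expanding $(1-x)^n$ binomially, and concludes because $u$ has a zero of order $n$ at $x=1$, so its derivatives of order $i<n$ vanish there. In that argument $i$ is the order of the (differential) operator and $n$ is the order of vanishing; in yours the roles are swapped: $n$ is the order of the (finite-difference) operator and $i$ is the degree of the polynomial $l\mapsto[\alpha+l]_i$ being annihilated. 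The variant you mention parenthetically at the end is essentially the paper's proof. Your route has the merit of being entirely self-contained in the discrete variable $l$ --- the only inputs are the binomial expansion of $(E-\mathrm{id})^n$ and the fact that $\Delta$ strictly lowers polynomial degree, and the degree count $\deg_l[\alpha+l]_i=i\leq n-1$ is transparent --- whereas the paper's one-line argument requires interpreting $x^\alpha$ for real $\alpha$ near $x=1$ and an implicit appeal to the Leibniz rule, but is quicker to state and also hands you the nonzero value of the sum when $i\geq n$. Both are sound; yours fills in more detail than the paper's sketch provides.
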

\begin{proof}
This can be done 
by looking at the function $u(x)=x^\alpha(1-x)^n$ for which 1 is a zero of order $n$, and by differentiating $u$ $i$-times.
\end{proof}

We will use this lemma to prove the next lemma.
\begin{lem} \label{intgegen}
Let $k>\frac12$. For all $n\geq0$, we have
\[
\int_{-1}^1 G^{(k)}_n(x) (1-x^2)^{k-\frac{3}{2}}dx=
\frac{\Gamma(k-\frac{1}{2})\sqrt\pi}{\Gamma(k)},\ \text{ if $n$ is even},
\]
and 0 if $n$ is odd.
\end{lem}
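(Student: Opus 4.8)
The plan is to dispose of the odd case by symmetry and to reduce the even case to the combinatorial identity of Lemma~\ref{lemmaiwa} after evaluating a family of Beta integrals. When $n$ is odd, $G_n^{(k)}$ is an odd polynomial (recall $G_n^{(k)}(-y)=(-1)^nG_n^{(k)}(y)$) while the weight $(1-x^2)^{k-\frac32}$ is even, so the integrand is odd and the integral over the symmetric interval $[-1,1]$ vanishes. This settles the case $n$ odd with no computation.

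For $n$ even, write $n=2N$. I would insert the explicit expansion \eqref{explicitgegen} and integrate term by term, so that everything is governed by the even moments
\[
\int_{-1}^1 y^{2j}(1-y^2)^{k-\frac32}\,dy = B\!\left(j+\tfrac12,\,k-\tfrac12\right) = \frac{\Gamma(j+\frac12)\,\Gamma(k-\frac12)}{\Gamma(j+k)},
\]
obtained from the substitution $u=y^2$ (here $j=N-m$ runs over $0,\dots,N$). Using the classical value $\Gamma(j+\frac12)=\frac{(2j)!}{4^j j!}\sqrt\pi$, the factorials $(2N-2m)!$ and the powers of $2$ coming from $(2y)^{n-2m}$ cancel cleanly, and the whole integral collapses to
\[
\frac{\Gamma(k-\frac12)\sqrt\pi}{\Gamma(k)}\sum_{m=0}^{N}(-1)^m\frac{\Gamma(k+2N-m)}{m!\,(N-m)!\,\Gamma(k+N-m)}.
\]
It therefore suffices to prove that the displayed sum equals $1$.

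To evaluate this sum I would substitute $l=N-m$ and rewrite $\frac{\Gamma(k+N+l)}{\Gamma(k+l)}=(k+l)_N=[k+l+N-1]_N$ as a falling factorial (using the relation $(\alpha)_n=[\alpha+n-1]_n$), together with $\frac1{m!(N-m)!}=\frac1{N!}\binom{N}{l}$, which turns the sum into
\[
\frac{(-1)^N}{N!}\sum_{l=0}^N(-1)^l\binom{N}{l}[k+l+N-1]_N.
\]
Now $l\mapsto[k+l+N-1]_N$ is a \emph{monic} polynomial of degree exactly $N$ in $l$; expanding it in the basis of falling factorials $[l]_0,\dots,[l]_N$, its leading coefficient is $1$. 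Lemma~\ref{lemmaiwa} (applied with $\alpha=0$) says that $\sum_{l=0}^N(-1)^l\binom{N}{l}[l]_i=0$ for every $i<N$, so all the lower-order contributions vanish and only the top term $[l]_N$ survives. Since $[l]_N=0$ for $0\le l<N$ and $[N]_N=N!$, the surviving sum equals $(-1)^N N!$, whence the bracketed sum is $1$ and the lemma follows.

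The only genuinely delicate point is the bookkeeping in the second step: one must check that the powers of $2$, the factorials $(2N-2m)!$, and the $\sqrt\pi$ issuing from $\Gamma(N-m+\frac12)$ all recombine so as to leave precisely the Gamma-quotient prefactor $\frac{\Gamma(k-1/2)\sqrt\pi}{\Gamma(k)}$ multiplied by a pure combinatorial sum. Once this algebraic simplification is in place, the final identity is an immediate consequence of Lemma~\ref{lemmaiwa} and the evaluation of $\sum_{l=0}^N(-1)^l\binom{N}{l}[l]_N$. I expect this cancellation, rather than the combinatorics, to be the main place where care is required.
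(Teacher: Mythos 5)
Your proof is correct. The reduction is the same as the paper's: parity disposes of odd $n$, and for even $n$ the explicit expansion \eqref{explicitgegen}, the Beta integral, and the duplication formula (in the form $\Gamma(j+\frac12)=\frac{(2j)!}{4^j j!}\sqrt\pi$) collapse the integral to $\frac{\Gamma(k-\frac12)\sqrt\pi}{\Gamma(k)}$ times the alternating sum $\frac{(-1)^N}{N!}\sum_{l=0}^N(-1)^l\binom{N}{l}(k+l)_N$; your bookkeeping here checks out. Where you genuinely diverge is in evaluating that sum. The paper proves $\sum_{j=0}^n(-1)^{n-j}\binom{n}{j}(k+j)_n=n!$ by induction on $n$: it splits $(k+j)_{n+1}=(k+j)_n(k+j+n)$, kills one piece with Lemma~\ref{lemmaiwa} at $\alpha=k+n-1$, and handles the other via the Vandermonde-type convolution $(a+b)_n=\sum_j\binom{n}{j}(a)_j(b)_{n-j}$ followed by a second application of Lemma~\ref{lemmaiwa}. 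You instead evaluate the sum in one shot: $[k+l+N-1]_N$ is a monic degree-$N$ polynomial in $l$, its expansion in falling factorials $[l]_0,\dots,[l]_N$ has all sub-leading terms annihilated by Lemma~\ref{lemmaiwa} at $\alpha=0$, and the surviving term $\sum_{l=0}^N(-1)^l\binom{N}{l}[l]_N=(-1)^NN!$ is computed directly since $[l]_N$ vanishes for $l<N$. This is essentially the statement that the $N$-th finite difference of a degree-$N$ polynomial picks out $N!$ times its leading coefficient; it avoids the induction and the Pochhammer convolution entirely, and is, if anything, tidier than the paper's argument (which moreover ends with a small notational slip, asserting $I_n=(n+1)!$ where $I_{n+1}=(n+1)!$ is meant). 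Both arguments rest on the same Lemma~\ref{lemmaiwa}, applied with different shifts $\alpha$.
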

\begin{proof}
Since $G^{(k)}_n$ is odd for $n$ odd, we just have to look at the even case.
The explicit form (\ref{explicitgegen}) of Gegenbauer polynomials is given  by
\[
G^{(k)}_{2n}(x)=\frac{1}{\Gamma(k)}\sum_{m=0}^n(-1)^m\frac{\Gamma(k+2n-m)}{m!\Gamma(2n-2m)!} (2x)^{2n-2m}.
\]
Hence, by parity,
\begin{align*}
\int_{-1}^1G^{(k)}_{2n}(x&) (1-x^2)^{k-\frac{3}{2}}dx\\
&=2\int_0^1G^{(k)}_{2n}(x) (1-x^2)^{k-\frac{3}{2}}dx\\
&=\frac{1}{\Gamma(k)}\sum_{m=0}^n(-1)^m\frac{\Gamma(k+2n-m)}{m!(2n-2m)!} 2^{2n-2m}
2\int_0^1 x^{2n-2m}(1-x^2)^{k-\frac{3}{2}}dx.
\end{align*}
Making the change of variable $u=x^2$ in the integral, we obtain
\[
2\int_0^1 x^{2n-2m}(1-x^2)^{k-\frac{3}{2}}dx=\int_0^1 u^{n-m-\frac{1}{2}}(1-u)^{k-\frac{3}{2}}dx=\frac{\Gamma(n-m+\frac{1}{2})\Gamma(k-\frac{1}{2})}{\Gamma(k+n-m)},
\]
by the definition of the Beta distribution.
So,
\begin{align*}
\int_{-1}^1G^{(k)}_{2n}&(x) (1-x^2)^{k-\frac{3}{2}}dx\\
&=\sum_{m=0}^n(-1)^m \frac{\Gamma(k+2n-m)}{m!(2n-2m)!}\frac{\Gamma(n-m+\frac{1}{2})}{\Gamma(k+n-m)} 2^{2n-2m} \frac{\Gamma(k-\frac{1}{2})}{\Gamma(k)}.
\end{align*}
Using duplication formula of the Gamma function, \ie
\[
\Gamma(z)\Gamma(z+\frac{1}{2})=2^{1-2z}\sqrt\pi\Gamma(2z),
\]
we obtain
\[
\Gamma(n-m+\frac12)=2^{-2(n-m)}\sqrt\pi\frac{(2n-2m)!}{(n-m)!},
\]
hence,
\[
\int_{-1}^1G^{(k)}_{2n}(x) (1-x^2)^{k-\frac{3}{2}}dx
=\sum_{m=0}^n(-1)^m\frac{1}{m!(n-m)!}\frac{\Gamma(k+2n-m)}{\Gamma(k+n-m)}\frac{\Gamma(k-\frac{1}{2})\sqrt\pi}{\Gamma(k)}.
\]
Using the rising factorial notation, and the change of index $j=n-m$, this can be rewritten  
\[
\int_{-1}^1G^{(k)}_{2n}(x) (1-x^2)^{k-\frac{3}{2}}dx
=\frac{1}{n!}\sum_{j=0}^n(-1)^{n-j} \binom{n}{j}(k+j)_n\frac{\Gamma(k-\frac{1}{2})\sqrt\pi}{\Gamma(k)}.
\]
So, to prove the lemma, we have to show that
\[
I_n=\sum_{j=0}^n(-1)^{n-j}\binom{n}{j}(k+j)_n=n!.
\]
This is done by induction. The cases 0 and 1 are easily checked. Suppose it is true for $n\geq1$. Introduce
\[
\tilde I_n=(-1)^nI_n=\sum_{j=0}^n(-1)^{j}\binom{n}{j}(k+j)_n.
\]
Then, since $(k+j)_{n+1}=(k+j)_n(k+j+n)$, we have
\[
\tilde I_{n+1}=(k+n)\sum_{j=0}^{n+1}(-1)^j\binom{n+1}{j}(k+j)_n
+\sum_{j=0}^{n+1}(-1)^j\binom{n+1}{j}j(k+j)_n.
\]
The first term in the right hand side of the above equality  is 
\[
(k+n)\sum_{j=0}^{n+1}(-1)^j\binom{n+1}{j}[k+j+n-1]_n=0,
\]
by lemma \ref{lemmaiwa} applied to $\alpha=k+n-1$. So, we have 
\[
\tilde I_{n+1}=
\sum_{j=0}^{n+1}(-1)^j\binom{n+1}{j}j(k+j)_n=
\sum_{l=0}^n(-1)^{l+1}(n+1)\binom{n}{l}(k+l+1)_n.
\]
Now, developing the rising factorial as
\[
(a+b)_n=\sum_{j=0}^n\binom{n}{j}(a)_j(b)_{n-j},
\]
(this can be seen by looking at the $n^\text{th}$-moment of the sum of two independent random variables with Gamma distributions of parameters $a$ and $b$), we get
\[
(k+l+1)_n=\sum_{j=0}^n\binom{n}{j}(k+l)_j(1)_{n-j}
=\sum_{j=0}^n \frac{n!}{j!}(k+l)_j,
\]
since $(1)_{n-j}=(n-j)!$. Hence, we have
\begin{align*}
\tilde I_{n+1}&=
\sum_{j=0}^n \frac{n!}{j!}(n+1)\left(\sum_{l=0}^n(-1)^{l+1}\binom{n}{l}(k+l)_j\right).
\end{align*}
But lemma \ref{lemmaiwa} gives
\[
\sum_{l=0}^n(-1)^l\binom{n}{l}(k+l)_j=\sum_{l=0}^n(-1)^l\binom{n}{l}[k+j-1+l]_j=0
\]
for $j=1,\ldots,n-1$. Hence, the only non-zero term  is for $j=n$, and 
\begin{align*}
\tilde I_{n+1}&=(n+1)(-1)\sum_{l=0}^n(-1)^l\binom{n}{l}(k+l)_n\\
&=(-1)^{n+1}(n+1)!,
\end{align*}
by induction hypothesis. So, we obtain $I_n=(n+1)!$, which proves the assertion, and the lemma follows.
\end{proof}

First, we make the following remark.
\begin{lem}
Let $(X_t)_{t\geq0}$ be the radial affine Dunkl process, and $k>\frac{1}{2}$. Then, for all $x\in]0,1[$, and all $t\geq0$, we have
\[
\E_x\left(\frac{1}{\sin^2(\pi X_t)}\right)<+\infty.
\]
\end{lem}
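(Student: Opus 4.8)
The plan is to compute the expectation explicitly from the spectral expansion of the semigroup obtained in the previous proposition, and then to check that the resulting series converges. First I would write
\[
\E_x\!\left(\frac{1}{\sin^2(\pi X_t)}\right)=\int_0^1 \frac{1}{\sin^2(\pi y)}\,q_t(x,y)\,dy,
\]
insert the expansion $q_t(x,y)=\sum_{n\geq0}e^{-\lambda_n t}G_n^{(k)}(\cos\pi x)G_n^{(k)}(\cos\pi y)\omega_n^{(k)}(\sin\pi y)^{2k}$, and --- granting for the moment the interchange of summation and integration --- reduce each term to a one-dimensional integral. The factor $(\sin\pi y)^{2k}/\sin^2(\pi y)=(\sin\pi y)^{2k-2}$ combines with the change of variable $z=\cos(\pi y)$ to give
\[
\int_0^1 G_n^{(k)}(\cos\pi y)(\sin\pi y)^{2k-2}\,dy=\frac1\pi\int_{-1}^1 G_n^{(k)}(z)(1-z^2)^{k-\frac32}\,dz,
\]
which is precisely the integral evaluated in Lemma~\ref{intgegen}. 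Thus only the even indices survive, and
\[
\E_x\!\left(\frac{1}{\sin^2(\pi X_t)}\right)=\frac{\Gamma(k-\frac12)}{\sqrt\pi\,\Gamma(k)}\sum_{m\geq0}e^{-\lambda_{2m}t}G_{2m}^{(k)}(\cos\pi x)\,\omega_{2m}^{(k)}.
\]

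The crux is then to justify the interchange and the convergence of this series, and this is exactly where the hypothesis $k>\frac12$ is used. Two ingredients control the terms. On one hand, since the $G_n^{(k)}$ are, up to the author's normalization, the ultraspherical (Jacobi) polynomials with $\alpha=\beta=k-\frac12\geq-\frac12$, they attain their maximum modulus on $[-1,1]$ at the endpoint, so $|G_n^{(k)}(\cos\pi y)|\leq G_n^{(k)}(1)$ for every $y$ (Szeg\H{o}'s monotonicity bound, see \cite{szego}). On the other hand, the product $G_n^{(k)}(1)\,\omega_n^{(k)}$ collapses: using $G_n^{(k)}(1)=\frac{\Gamma(2k+n)}{n!\,\Gamma(2k)}$ together with the explicit $\omega_n^{(k)}$, the factor $\Gamma(n+2k)$ and the factorial $n!$ cancel and one finds
\[
G_n^{(k)}(1)\,\omega_n^{(k)}=\frac{(k+n)\Gamma(k)^2}{2^{1-2k}\,\Gamma(2k)},
\]
which is only linear in $n$.

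With these two facts the absolute double series is dominated by
\[
\left(\int_0^1(\sin\pi y)^{2k-2}\,dy\right)\sum_{n\geq0}e^{-\lambda_n t}\,G_n^{(k)}(1)^2\,\omega_n^{(k)},
\]
where the prefactor is finite \emph{precisely} because $2k-2>-1$, i.e. $k>\frac12$, and where the general term grows at most polynomially (like $n^{2k}$, by the displayed cancellation and $\Gamma(2k+n)/n!\sim n^{2k-1}$) while $e^{-\lambda_n t}=e^{-\frac{\pi^2}{2}n(n+2k)t}$ decays like a Gaussian in $n$. Hence for every $t>0$ the dominating series converges, which simultaneously legitimizes the term-by-term integration by Fubini--Tonelli and shows that the value is finite; the case $t=0$ is trivial since $X_0=x\in\,]0,1[$.

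I expect the main obstacle to be the uniform control of $G_n^{(k)}(\cos\pi y)$ near the endpoints $y=0,1$, where the weight $1/\sin^2(\pi y)$ blows up: one cannot use the interior asymptotics $G_n^{(k)}(\cos\pi y)=O(n^{k-1})$ there, and it is the endpoint bound $|G_n^{(k)}(\cos\pi y)|\leq G_n^{(k)}(1)$ together with the integrability $\int_0^1(\sin\pi y)^{2k-2}\,dy<+\infty$ that must carry the argument. Once these are in place, the Gaussian decay of $e^{-\lambda_n t}$ makes the remaining estimate routine.
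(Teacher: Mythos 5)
Your proof is correct and follows essentially the same route as the paper: expand $q_t(x,y)$ in Gegenbauer polynomials, observe that the weight collapses to $(\sin\pi y)^{2k-2}$, which is integrable exactly when $k>\frac12$, and let the factor $e^{-\lambda_n t}$ handle the summation. The paper's own proof is terser and simply asserts the summability, so your extra step --- the endpoint bound $|G_n^{(k)}(\cos\pi y)|\leq G_n^{(k)}(1)$ and the cancellation in $G_n^{(k)}(1)\,\omega_n^{(k)}$, which rigorously justifies the interchange of sum and integral --- is a welcome (and correct) completion of the same argument rather than a different one.
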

\begin{proof}
We have,
\begin{align*}
\E_x\left(\frac{1}{\sin^2(\pi X_t)}\right)&=\int_{[0,1]} \frac{1}{\sin^2(\pi y)}q_t(x,y)dy\\
&=\int_{[0,1]} \sum_{n\geq0}e^{-\lambda_n t}G_n^{(k)}(\cos\pi x)G_n^{(k)}(\cos\pi y)\omega_n^{(k)}(\sin\pi y)^{2k-2}dy,
\end{align*}
which is integrable as soon as $2k-2>-1$, \ie $k>\frac{1}{2}$, the summability  of the series being guaranteed by the term $e^{-\lambda_n t}$.
\end{proof}
Note that the proof of this lemma gives that for $k=\frac{1}{2}$, $\E_x\left(\frac{1}{\sin^2(\pi X_t)}\right)=+\infty$ for all $t>0$.
Actually, lemma \ref{intgegen} leads to the following proposition. 
\begin{prop} \label{propsin}
Let $k>\frac{1}{2}$. 
For all $x\in]0,1[$, and all $t\geq0$, we have
\[
\E_x \left(\int_0^t \frac{ds}{\sin^2(\pi X_s)}\right)<+\infty.
\]
\end{prop}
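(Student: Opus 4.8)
The plan is to push the expectation inside the time integral by Tonelli's theorem (the integrand is nonnegative in $(s,\omega)$), which reduces the statement to the integrability on $[0,t]$ of the map $s\mapsto\E_x\big(1/\sin^2(\pi X_s)\big)$, and then to make this quantity fully explicit through the spectral expansion of the semigroup. Using the expression of $q_s(x,y)$ from the preceding proposition, I would write
\[
\E_x\Big(\frac{1}{\sin^2(\pi X_s)}\Big)=\sum_{n\geq0}e^{-\lambda_n s}G_n^{(k)}(\cos\pi x)\,\omega_n^{(k)}\int_0^1 G_n^{(k)}(\cos\pi y)(\sin\pi y)^{2k-2}\,dy,
\]
the interchange of the sum and the $y$-integral being justified, for each fixed $s>0$, exactly as in the previous lemma (the factor $e^{-\lambda_n s}$ dominating the polynomially growing coefficients).

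The key reduction is then that the change of variable $u=\cos\pi y$ turns the inner integral into $\frac1\pi\int_{-1}^1 G_n^{(k)}(u)(1-u^2)^{k-3/2}\,du$, which is precisely the integral evaluated in Lemma~\ref{intgegen}. Consequently only the even indices survive, and each of them contributes the \emph{same}, $n$-independent constant, so that
\[
\E_x\Big(\frac{1}{\sin^2(\pi X_s)}\Big)=\frac{\Gamma(k-\tfrac12)}{\sqrt\pi\,\Gamma(k)}\sum_{n\ \mathrm{even}}e^{-\lambda_n s}G_n^{(k)}(\cos\pi x)\,\omega_n^{(k)}.
\]

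Integrating in $s$ over $[0,t]$ and exchanging sum and integral once more (legitimate by Fubini, the absolute convergence being supplied by the estimate below), and using $\int_0^t e^{-\lambda_n s}\,ds=(1-e^{-\lambda_n t})/\lambda_n\le 1/\lambda_n$ for $n\ge1$ together with the value $t$ for $n=0$, I obtain
\[
\E_x\Big(\int_0^t\frac{ds}{\sin^2(\pi X_s)}\Big)=\frac{\Gamma(k-\tfrac12)}{\sqrt\pi\,\Gamma(k)}\Big(\omega_0^{(k)}t+\sum_{\substack{n\ge2\\ n\ \mathrm{even}}}\frac{1-e^{-\lambda_n t}}{\lambda_n}\,G_n^{(k)}(\cos\pi x)\,\omega_n^{(k)}\Big).
\]
There remains to show the last series converges absolutely, for which I estimate the three factors: from $\lambda_n=\frac{\pi^2}2 n(n+2k)$ one has $\lambda_n=\Theta(n^2)$; Stirling applied to $\omega_n^{(k)}=\frac{n!(k+n)\Gamma(k)^2}{2^{1-2k}\Gamma(n+2k)}$ gives $\omega_n^{(k)}=O(n^{2-2k})$; and, crucially, since $x\in\,]0,1[$ keeps $\cos\pi x$ in the open interval bounded away from $\pm1$, the interior (oscillatory) asymptotics of the Gegenbauer polynomials (see \cite{szego}) yield $|G_n^{(k)}(\cos\pi x)|=O(n^{k-1})$. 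Multiplying, the general term is $O(n^{-1-k})$, hence summable for every $k>0$, and the conclusion follows.

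I expect the last estimate to be the main obstacle. The naive endpoint bound $|G_n^{(k)}(\cos\pi x)|\le G_n^{(k)}(1)=O(n^{2k-1})$ would only produce a general term of order $n^{-1}$, which diverges; one must genuinely exploit the sharper interior decay $O(n^{k-1})$, valid precisely because the process never reaches the walls and $\cos\pi x$ stays away from $\pm1$. The hypothesis $k>\tfrac12$, by contrast, is already consumed in the earlier ingredients — the finiteness of each $\E_x\big(1/\sin^2(\pi X_s)\big)$, equivalently the integrability of the weight $(\sin\pi y)^{2k-2}$, and the convergence of the integral in Lemma~\ref{intgegen} — while it is the $s$-integration that furnishes the extra factor $1/\lambda_n\sim n^{-2}$ making the final series converge.
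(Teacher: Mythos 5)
Your proposal is correct and follows essentially the same route as the paper: Tonelli, the spectral expansion of $q_s(x,y)$, the change of variable $u=\cos\pi y$ reducing the inner integral to Lemma~\ref{intgegen} (so only even $n$ contribute a common constant), and then the convergence estimate combining $\lambda_n=\Theta(n^2)$, Stirling for $\omega_n^{(k)}=O(n^{2-2k})$, and the interior asymptotics $|G_n^{(k)}(\cos\pi x)|=O(n^{k-1})$ to get a general term $O(n^{-1-k})$. Your closing remark correctly pinpoints that the interior (rather than endpoint) Gegenbauer bound is the essential estimate and that $k>\tfrac12$ is consumed by the integrability of the weight and Lemma~\ref{intgegen} rather than by the final summation.
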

\begin{proof}
For all $t>0$, we have,
\begin{align*}
\int_0^t\E_x&\left(\frac{ds}{\sin^2(\pi X_s)}\right)\\
&=\int_0^t\int_0^1 \frac{1}{\sin^2(\pi y)}q_s(x,y)dyds\\
&=\int_0^t\int_0^1 \sum_{n\geq0}e^{-\lambda_n s}G_n^{(k)}(\cos\pi x)G_n^{(k)}(\cos\pi y)\omega_n^{(k)}(\sin\pi y)^{2k-2}dyds\\
&=\sum_{n\geq0}\frac{1}{\lambda_n}(1-e^{-\lambda_n t})G_n^{(k)}(\cos\pi x)\omega_n^{(k)}
\int_0^1G_n^{(k)}(\cos\pi y)(\sin\pi y)^{2k-2}dy.
\end{align*}
First, remark that the term $n=0$ is not a problem since $G^{(k)}_0(y)=1$ and $\omega^{(k)}_0=\frac{k\Gamma(k)^2}{2^{1-2k}\Gamma(2k)}$. 
Now, by the change of variables $u=\cos(\pi y)$, and lemma \ref{intgegen}, we have
\[
\int_0^1G_n^{(k)}(\cos\pi y)(\sin\pi y)^{2k-2}dy=\frac{1}{\pi}\int_{-1}^1G^{(k)}_n(u)(1-u^2)^{k-\frac32}du=\frac{1}{\sqrt \pi}\frac{\Gamma(k-\frac12)}{\Gamma(k)},
\]
if $n$ is even, and 0 if $n$ is odd. So,
\[
\int_0^t\E_x\left(\frac{ds}{\sin^2(\pi X_s)}\right)
=\sum_{\substack{n\geq0\\ n\text{ even}}} \frac{1}{\lambda_n}(1-e^{-\lambda_n t})G^{(k)}_n(\cos\pi x)
\omega^{(k)}_n \frac{1}{\sqrt\pi}\frac{\Gamma(k-\frac12)}{\Gamma(k)}.
\]
Hence, it suffices to prove that 
\[
\sum_{\substack{n\geq0\\ n\text{ even}}} \left|\frac{1}{\lambda_n}G^{(k)}_n(\cos\pi x)
\omega^{(k)}_n\right| <+\infty.
\]
Using Stirling's formula for the gamma function, \ie
\[
\Gamma(z)=\frac{\sqrt{2\pi}}{\sqrt z}z^ze^{-z}\left(1+\operatorname{O}\!\Big(\frac{1}{z}\Big)\right), 
\]
we find that 
\[
\omega^{(k)}_n\underset{+\infty}{\sim} n^{2-2k}.
\]
Now, using asymptotic expansion of Gegenbauer polynomials (see \cite{magnus}), that is
\[
G^{(k)}_n(\cos \pi x)=2^{1-k}\frac{\Gamma(n+k)}{n!\Gamma(k)}(\sin\pi x)^{-k}\cos\left((n+k)\pi x-k\pi^2/2\right) 
+\operatorname{O}\!\left(n^{k-2}\right),
\]
for $0<x<1$, we have (recall that $\lambda_n=\frac{\pi^2}{2}n(n+2k)$),
\[
\left|\frac{1}{\lambda_n}\omega^{(k)}_n G^{(k)}_n(\cos\pi x)\right|\underset{+\infty}{\sim}\frac{1}{n^{k+1}},
\]
Hence, since $k>\frac12$, the series is convergent, which proves the proposition.
\end{proof}

\begin{rmq} 
We obviously obtain the same results if a.s. $X_0=x$ for some $x\in\R\setminus\Z$ (not only in $]0,1[$), that is for all $x\in\R\setminus\Z$, $t\geq0$, and $k>\frac{1}{2}$,
\[
\E_x\left(\int_0^t\frac{ds}{\sin^2(\pi X_s)}\right)<+\infty.
\]
\end{rmq}


Due to the importance of the process $\frac{1}{\sin^2(\pi X_\cdot)}$ for the construction of the affine Dunkl process as we will see in the next section, we put the following definition.
\begin{defi} \label{defeta}
For all $t\geq0$, we define
\[
\eta_t=\frac{k}{2}\pi^2\int_0^t \frac{ds}{\sin^2(\pi X_s)}.
\]
\end{defi}
Let us summarize the properties of the process $\eta$. For all $x\in\R\setminus\Z$, it is a $\P_x$-almost surely  finite continuous increasing process, with $\eta_0=0$ and $\eta_t\to+\infty$ a.s. when $t$ goes to infinity. Furthermore, it has finite expectation for 
$k>\frac{1}{2}$, and for $k=\frac{1}{2}$, $\E_x(\eta_t)=+\infty$ for $t>0$.

Now we can pass to the construction properly  speaking of the affine Dunkl process.


\section{The affine Dunkl process}\label{sectionaffinedunkl}

We will define in this section the affine Dunkl process with parameter $k$ ($k\geq\frac12$) as the Markov process with infinitesimal generator
\[
\A f(x)=\frac{1}{2}f''(x) +k\pi\cot(\pi x) f'(x) + \frac{k}{2}\sum_{n\in \Z}\frac{f(s_n(x))-f(x)}{(x-n)^2},
\]
for $f\in C^2_b(\R)$ and $x\in\R\setminus\Z$. To achieve this, we will start with the radial affine Dunkl process living in some  alcove, and add the jumps at some random times. This is a kind of skew-product decomposition as the one done in \cite{chyb} (see also \cite{schapiraskew} for the same decomposition in the Heckman-Opdam setting). More precisely, we construct a pure jump process on the affine  Weyl group $W$, and use the action of $W$ on the radial process.

\subsection{Jump process on the affine Weyl group}

First, using what we have seen in (\ref{defeta}), that is $\eta_t=\frac{k}{2}\pi^2\int_0^t\frac{ds}{\sin^2(\pi X_s)}$ is an a.s. finite continuous increasing process, with $\eta_0=0$ and $\eta_t\to+\infty$ as $t\to+\infty$, we have that $\eta_t$ is a well-defined time change. Let us call its inverse $a(t)$, \ie
\[
a(t)=\inf\{s\geq 0 \, |\, \eta_s>t\},
\]
so $a$ is continuous, increasing, $a(0)=0$, $a(t)<+\infty$ for all $t\geq0$, and $a(t)\to+\infty$ as $t\to+\infty$, a.s. It is well-known that such time-change transformation preserves the Markovian character of a process (see for example \cite{dynkin}), so the process
\[
\tilde X_t=X_{a(t)}
\]
is a strong Markov process, with infinitesimal generator
\[
\tilde \L f(y)=\frac{2\sin^2(\pi y)}{k\pi^2}\L f(y),
\]
for $f\in C(\bar I)\cap C^2(I)$ and $y\in I$, where $I$ is the alcove containing $X_0$.

Recall that  for $x\in\R\setminus\Z$, we have the series expansion
\[
\frac{\pi^2}{\sin^2(\pi x)}=\sum_{n\in\Z}\frac{1}{(x-n)^2}.
\] 
 We denote by $\sigma^x$ the following probability measure on the affine Weyl group $W$
\[
\sigma^x(dw)=\sum_{n\in\Z}\frac{\sin^2(\pi x)}{\pi^2}\frac{1}{(x-n)^2}\delta_{s_n}(dw).
\]
Let $(N_t)_{t\geq0}$ be a Poisson point process  with intensity 1, independent of $X$, \ie
\[
N_t=\sum_{n\geq1}\1_{\{\tau_n\leq t\}},
\]
where $\tau_0=0$  and $\tau_n=\sum_{j=1}^n e_j$, where $(e_j)_{j\geq1}$ is a sequence of independent and identically distributed random variables, with exponential distribution of parameter 1, and independent of $X$.

Now define recursively the processes $\tilde X^j$ and the random variables $(\beta_j)_{j\geq1}$ on $W$ by
\begin{equation}\label{Xn}
\tilde X^j_t=\beta_j\point\tilde X^{j-1}_t,
\end{equation}
 for all $j\geq1$, with $\tilde X^0_t=\tilde X_t$, and where   conditionally 
 on $\{\tilde X^{j-1}_{\tau_j}=x\}$, $\beta_j$ is distributed according to $\sigma^x$. Note  that $\tilde X^j$ is the Markov process $\tilde X$ with initial condition $\tilde X^j_0=\beta_j\cdots\beta_1\point\tilde X_0$.

Using left multiplication on $W$, we define the jump process on $W$ 
\begin{equation}\label{jumpprocessW}
w_t=\xi_{N_t}=\beta_n\cdots\beta_1, \quad \text{ for $t\in[\tau_n,\tau_{n+1}[$},
\end{equation}
where $\xi_n=\beta_n\cdots\beta_1$, for all $n\geq1$.

\subsection{Skew-product decomposition}

Given an operator $\A$ with domain $\mathcal D(\A)$, we say that a c\`adl\`ag stochastic process $(Y_t)_{t\geq0}$  is  a solution of the martingale problem for $\A$ if for all $u\in\mathcal D(\A)$,
\[
u(Y_t)-u(Y_0)-\int_0^t\A u(Y_s)ds
\]
is a $(\mathcal F^Y_t)$-martingale,  where $(\mathcal F^Y_t)_{t\geq0}$ is the natural filtration of $Y$ (see \cite{ek} for a detailed  exposition of the theory of martingale problems).

Now we can state the main result of this paper.
\begin{thm}
Let $(X_t)_{t\geq0}$ be the radial affine Dunkl process with $X_0=x\in\R\setminus\Z$ a.s., and parameter $k\geq\frac12$, and $(w_t)_{t\geq0}$ be the pure jump process defined by (\ref{jumpprocessW}). Then the process $(Y_t)_{t\geq0}$ defined by
\[
Y_t=w_{\eta_t}\!\cdot\! X_t,
\]
with $\eta_t=\frac{k}{2}\pi^2\int_0^t\frac{ds}{\sin^2(\pi X_s)}$, is a Markov process on $\R$, with infinitesimal generator $\A$ given by
\[
\A f(y)=\frac{1}{2}f''(y)+k\pi\cot(\pi y)f'(y) + \frac{k}{2}\sum_{p\in \Z}\frac{f(s_p(y))-f(y)}{(y-p)^2},
\]
for $f\in C^2_b(\R)$ and $y\in\R\setminus\Z$, and such that $Y_t\in\R\setminus\Z$ for all $t\geq0$ a.s.
\end{thm}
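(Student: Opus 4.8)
The plan is to prove the theorem through the martingale problem, exploiting the fact that $Y$ is a deterministic time change of an auxiliary jump-diffusion whose jumps occur at Poisson times. Write $\phi(x)=\frac{2\sin^2(\pi x)}{k\pi^2}$, so that the time-changed radial process $\tilde X_s=X_{a(s)}$ has generator $\tilde\L=\phi\,\L$, and introduce
\[
Z_s=w_s\cdot\tilde X_s=\xi_{N_s}\cdot\tilde X_s,
\]
the auxiliary process living in the time-changed clock. Since $\tilde X_{\eta_t}=X_t$ and $w_{\eta_t}$ is the group element appearing in the theorem, we have $Y_t=Z_{\eta_t}$; thus it suffices to identify the generator of $Z$ and then transfer it through the time change $\eta$.

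First I would establish that $Z$ solves the martingale problem for the operator
\[
\mathcal G f=\tilde\L f+\mathcal J f,\qquad \mathcal J f(x)=\int_W\big(f(w\cdot x)-f(x)\big)\,\sigma^x(dw)=\sum_{n\in\Z}\frac{\sin^2(\pi x)}{\pi^2(x-n)^2}\big(f(s_n(x))-f(x)\big),
\]
for $f\in C^2_b(\R)$. Two invariance facts drive this. One checks directly, using the periodicity of $\cot$ and $\cot(\pi s_n(x))=-\cot(\pi x)$, that $\L$ is equivariant under the affine Weyl group, $\L(f\circ w)=(\L f)\circ w$ for $w\in W$; since $\sin^2(\pi\cdot)$ is $W$-invariant, $\tilde\L$ is equivariant as well. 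Consequently, on each interval $[\tau_n,\tau_{n+1})$ the process $Z=\xi_n\cdot\tilde X$ is a diffusion with generator $\tilde\L$, while at each Poisson time $\tau_j$ it jumps from $x=Z_{\tau_j^-}$ to $s_n(x)$ with probability $\frac{\sin^2(\pi x)}{\pi^2(x-n)^2}$ (note $\sigma^x$ is a probability measure by the expansion $\frac{\pi^2}{\sin^2(\pi x)}=\sum_n(x-n)^{-2}$). Writing $f(Z_s)-f(Z_0)$ as the sum of its increments over the diffusive pieces and over the jumps, Dynkin's formula on each piece together with the compensation of the rate-one Poisson jumps (whose compensator is $\int_0^s\mathcal J f(Z_u)\,du$) shows that $f(Z_s)-f(Z_0)-\int_0^s\mathcal G f(Z_u)\,du$ is a martingale. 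Finally the algebraic identity
\[
\mathcal J f(x)=\frac{2\sin^2(\pi x)}{k\pi^2}\cdot\frac{k}{2}\sum_{n\in\Z}\frac{f(s_n(x))-f(x)}{(x-n)^2}=\phi(x)\cdot\frac{k}{2}\sum_{n\in\Z}\frac{f(s_n(x))-f(x)}{(x-n)^2}
\]
gives $\mathcal G=\phi\,\A$.

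It remains to undo the time change. Because $\phi$ is $W$-invariant, $\phi(Z_u)=\phi(\tilde X_u)$, and from the inverse relation $a'(s)=\phi(\tilde X_s)$ between $\eta$ and $a$ we obtain that $a(s)=\int_0^s\phi(Z_u)\,du$ is exactly the inverse additive functional of $\eta$. The time-change theorem for Markov processes (\cite{ek,dynkin}) then shows that $Y_t=Z_{\eta_t}$ solves the martingale problem for $\frac1\phi\,\mathcal G=\A$; concretely, if $M^f_s=f(Z_s)-f(Z_0)-\int_0^s\mathcal G f(Z_u)\,du$, the change of variables $u=\eta_r$ (with $\eta_r'=1/\phi(Y_r)$) turns $f(Y_t)-f(Y_0)-\int_0^t\A f(Y_r)\,dr$ into the stopped martingale $M^f_{\eta_t}$. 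Since $Y$ is moreover a time change of the Markov process $Z$ by an additive functional, it is itself Markov, with generator $\A$. For the last assertion, the radial process never meets $\Z$ and every $w\in W$ preserves $\R\setminus\Z$ (the reflections $s_n$ and the integer translations fix $\Z$), so $Y_t=w_{\eta_t}\cdot X_t\in\R\setminus\Z$; moreover the jumps of $Y$ occur at the times $t$ with $\eta_t\in\{\tau_n\}$, and since $\eta_T<+\infty$ a.s.\ there are only finitely many on any $[0,T]$.

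The main obstacle I expect lies in the rigorous bookkeeping of the second step: one must set up the filtration carrying both $X$ and the independent Poisson and reflection data, invoke the strong Markov property of $\tilde X$ at the independent Poisson times to identify the conditional law $\sigma^x$ of each jump, and justify the compensation in the presence of the countable family of possible jump destinations (controlled by the absolute convergence $\sum_n(x-n)^{-2}=\pi^2/\sin^2(\pi x)<\infty$ and boundedness of $f$). The transfer across the time change also requires verifying the hypotheses of the time-change theorem, namely $\phi>0$ and the finiteness, continuity and strict monotonicity of $a$ and $\eta$ together with $\eta_\infty=+\infty$, all of which are furnished by the properties of $\eta$ established in Section~\ref{sectionradialaffine}.
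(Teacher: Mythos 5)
Your proposal follows essentially the same route as the paper: time-change the radial process by the inverse of $\eta$, superpose jumps at the Poisson times with kernel $\sigma^x$, verify that the resulting process solves the martingale problem for $\tilde{\A}=\phi\,\A$, and then undo the time change via the random-time-change theorem of Ethier--Kurtz. The only minor divergence is at the very end: you conclude the Markov property directly from the fact that $Y$ is a time change of the Markov process $Z$ by an additive functional, whereas the paper deduces it from uniqueness of the one-dimensional marginals of solutions to the martingale problem (by projecting onto the principal alcove so that $\pi(Y_t)=X_t$); both arguments are valid.
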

\begin{defi}
The process $(Y_t)_{t\geq0}$ defined in the  above theorem is called the \emph{affine Dunkl process} with parameter $k$.
\end{defi}
\begin{proof}
Let us call $I$ the alcove containing $x$, that is $I=]\lfloor x\rfloor, \lfloor x \rfloor +1[$. By proposition \ref{eqradial} and remark \ref{eqradialremark}, we have that $X$ lives in $I$ almost surely.
Consider the process $\tilde X$, with generator $\tilde \L$, defined previously by $\tilde X_t=X_{a(t)}$, where $a(t)$ is the inverse of $\eta_t$. Define, for all $t\geq0$,
\[
\tilde Y_t=w_t\point \tilde X_t.
\]
Hence, for $t\in [\tau_n,\tau_{n+1}[$, we have 
\begin{align*}
\tilde Y_t&=\beta_n\cdots\beta_1\point \tilde X_t\\
&=\tilde X^n_t,
\end{align*}
where the processes $\tilde X^n$ are defined by (\ref{Xn}).

By construction $(\tilde Y_t)_{t\geq0}$ is a c\`adl\`ag process which jumps at the random times $\tau_n$'s.
We denote by $(\mathcal {\tilde F}_t)_{t\geq0}$ the natural filtration of $\tilde Y$, and by $\mathcal {\tilde F}_t^n=\sigma(\tilde X^n_s, s\leq t)\vee \sigma(N_s, s\leq t)$.
As we shall see, $(\tilde Y_t)_{t\geq0}$ is a solution of the martingale problem for the generator $\tilde \A$ given by
\[
\tilde \A f(y)=\tilde \L f(y) + \int_W \left( f(w\!\cdot\! y)-f(y)\right)\sigma^y(dw),
\]
acting on $f\in C^2_b(\R)$ for $y\in \R\setminus\Z$,
where $\sigma^x(dw)=\sum_{n\in\Z}\frac{\sin^2(\pi x)}{\pi^2}\frac{1}{(x-n)^2}\delta_{s_n}(dw)$, and $\tilde \L$ is the generator  of $\tilde X$.
The proof follows exactly the lines of proposition $10.2$, chapter  4 of \cite{ek}, see also lemma 17 in \cite{chyb}. First, since $(\tilde X^n_t)_{t\geq \tau_n}$ is a Markov process with generator $\tilde \L$, and using independence of $\tilde X$ and $(\tau_{n})_{n\geq0}$, 
we have that  for $u\in C^2_b(\R)$ 
\[
u(\tilde X^n_{(t\vee \tau_n)\wedge \tau_{n+1}})-u(\tilde X^n_{\tau_n})-\int_{\tau_n}^{(t\vee \tau_n)\wedge \tau_{n+1}} \L u(\tilde X^n_s)ds
\]
is a $(\mathcal {\tilde F}_t)$-martingale. 
Hence, summing  over $n\geq0$, and using 
\[
u(\tilde X^n_{(t\vee \tau_n)\wedge \tau_{n+1}})
=u(\tilde X^n_{\tau_n})\1_{\{t<\tau_n\}}
+u(\tilde X^n_t)\1_{\{\tau_n\leq t <\tau_{n+1}\}}
+u(\tilde X^n_{\tau_{n+1}})\1_{\{t\geq \tau_{n+1}\}},
\]
we get that
\begin{equation}\label{martingale1}
u(\tilde Y_t)-u(\tilde Y_0)-\int_0^t \L u(\tilde Y_s)ds -\sum_{n=1}^{N_t} \left( u(\tilde X^n_{\tau_n})-u(\tilde X^{n-1}_{\tau_n})\right)
\end{equation}
is a $(\mathcal {\tilde F}_t)$-martingale.  Note that, since $N$ is a Poisson process,  we have that
\[
\int_0^t \left(\int_W u(w\point\tilde Y_{s^-})\sigma^{\tilde Y_{s^-}}(dw)-u(\tilde Y_{s^-})\right)d(N_s-s),
\]
is a $(\mathcal {\tilde F}_t)$-martingale, and is equal to
\begin{equation}\label{martingale3}
\begin{split}
\sum_{n=1}^{N_t}&\left(\int_W u(w\point  \tilde X^{n-1}_{\tau_n})\sigma^{\tilde X^{n-1}_{\tau_n}}(dw)-u(\tilde X^{n-1}_{\tau_n})\right) \\ &\qquad\qquad\qquad
-\int_0^t \left(\int_W u(w\point\tilde Y_{s^-})\sigma^{\tilde Y_{s^-}}(dw)-u(\tilde Y_{s^-})\right)ds
.
\end{split}
\end{equation}
$$ $$
But 
\begin{equation}\label{martingale2}
\sum_{n=1}^{N_t}\left(u(\tilde X^n_{\tau_n}) -
\int_W u(w\point  \tilde X^{n-1}_{\tau_n})\sigma^{\tilde X^{n-1}_{\tau_n}}(dw)\right)
\end{equation}
is also a $(\mathcal {\tilde F}_t)$-martingale. This follows from the fact that for all $t_1<\cdots<t_m\leq s<t$, and all $h_1,\ldots,h_m$ measurable bounded functions, we have
\begin{align*}
&\E\left(\prod_{i=1}^mh_i(Y_{t_i}) \sum_{n\geq1}\1_{\{s<\tau_n\leq t\}}\left(u(\tilde X^n_{\tau_n}) -
\int_W u(w\point  \tilde X^{n-1}_{\tau_n})\sigma^{\tilde X^{n-1}_{\tau_n}}(dw)\right) \right)\\
&=\E\left(\prod_{i=1}^mh_i(Y_{t_i}) \sum_{n\geq1}\1_{\{s<\tau_n\leq t\}}
\E\left( u(\tilde X^n_{\tau_n}) -
\int_W u(w\point  \tilde X^{n-1}_{\tau_n})\sigma^{\tilde X^{n-1}_{\tau_n}}(dw) \Big| \, 
\mathcal {\tilde F}^{n-1}_{\tau_n}                           \right)                            \right) \\
&=0,
\end{align*}
since $\tilde X^n_{\tau_n}=\beta_n\point\tilde X^{n-1}_{\tau_n}$, and $\beta_n$ is distributed according to $\sigma^{\tilde X^{n-1}_{\tau_n}}$ conditionally to $\tilde X^{n-1}_{\tau_n}$.

Adding (\ref{martingale2}) and (\ref{martingale3}) to (\ref{martingale1}), we get that
\[
u(\tilde Y_t)-u(\tilde Y_0)-\int_0^t \L u(\tilde Y_s)ds -\int_0^t \int_W \left( u(w\point \tilde Y_s) -u(\tilde Y_s)\right)\sigma^{\tilde Y_s}(dw)ds
\]
is a $(\mathcal {\tilde F}_t)$-martingale (since $\tilde Y$ is c\`adl\`ag and $\{s\, |\, \tilde Y_{s^-}\not = \tilde Y_s\}$ is Lebesgue negligible, we can replace $\tilde Y_{s^-}$ by $\tilde Y_s$ in the last integral). Hence, we have obtained that for $u\in C^2_b(\R)$
\[
u(\tilde Y_t)-u(\tilde Y_0)-\int_0^t \tilde \A u(\tilde Y_s)ds
\]
is a $(\mathcal {\tilde F}_t)$-martingale, which proves that $(\tilde Y_t)_{t\geq 0}$ is a solution of the martingale problem for the generator $\tilde \A$.

We are now interested in solution of
\begin{equation}\label{solchange}
Y_t= \tilde Y\left(\int_0^t\beta(Y_s)ds\right),
\end{equation}
with $\beta(y)=\frac{k\pi^2}{2\sin^2(\pi y)}$ for $y\in\R\setminus\Z$, and $\beta(y)=0$ for $y\in\Z$. 
First remark that since $\sin^2(\pi \cdot)$ is a $W$-invariant function, we have
$\beta (\tilde Y_t)=\beta (\tilde X_t)$ almost surely, since $\tilde Y_t=w_t\point \tilde X_t$. Since $X$ never touches 0 and 1 a.s., so is $\tilde X$, and $\beta \circ \tilde X$ is a.s. bounded on bounded intervals. Define
\begin{align*}
\zeta_1&=\inf\left\{t\geq 0\ | \int_0^t \frac{ds}{\beta(\tilde Y_s)}=+\infty \right\}\\
\intertext{and,}
\zeta_0&=\inf\left\{t\geq0\ |\ \beta(\tilde Y_t)=0 \right\}.
\end{align*}
Since $0<\sin^2(\pi \tilde X_t)<1$ for all $t\geq0$, we easily see that $\zeta_1=\zeta_0=+\infty$, so by applying theorem $1.3$, chapter 6 of \cite{ek}, we have that equation (\ref{solchange}) admits a solution $(Y_t)_{t\geq0}$, which is a solution of the martingale problem  for $\A=\beta \tilde \A$, \ie for all $u\in C^2_b(\R)$,
\[
u(Y_t)-u(Y_0)-\int_0^t \A u(Y_s)ds
\]
is a $(\mathcal {\tilde F}_{\tau(t)})$-martingale, where $\tau(t)=\int_0^t\beta(Y_s)ds$. Since we have
\begin{align*}
\tilde \A u(y)&=\tilde \L u(y) + \int_W \left( u(w\!\cdot\! y)-u(y)\right)\sigma^y(dw)\\
&= \frac{2\sin^2(\pi y)}{k\pi^2}\L u(y) +\frac{\sin^2(\pi y)}{\pi^2}\sum_{n\in\Z}\frac{u(s_n(y))-u(y)}{(y-n)^2},
\end{align*}
we obtain
\[
\A u(y)=\L u(y) + \frac{k}{2}\sum_{n\in \Z}\frac{u(s_n(y))-u(y)}{(y-n)^2},
\]
acting on $u\in C^2_b(\R)$, for $y\in\R\setminus\Z$. Since $\beta(Y_s)=\beta(X_s)$ for all $s\geq0$ by the $W$-invariance of $\sin^2(\pi\cdot)$, we have that
\[
\int_0^t\beta(Y_s)ds=\eta_t,
\]
for all $t\geq0$. Hence, we obtain the  skew-product decomposition of $(Y_t)_{t\geq0}$ given by
\[
Y_t=w_{\eta_t}\!\cdot\! X_t,
\]
for all $t\geq0$.
Let $\pi$ be the projection onto the principal alcove $\mathcal A_0=]0,1[$. Then, by the invariance of $\pi$ under the action of the Weyl group $W$ and the skew-product representation of the affine Dunkl process, we see that
\[
\pi(Y_t)=X_t\  \text{ a.s.},
\]
\ie $(\pi(Y_t))_{t\geq0}$ is the radial affine Dunkl process. Hence, if two process $Y$ and $Y'$ are solutions to the martingale problem for $\A$, then $\pi(Y_t)=\pi(Y'_t)=X_t$, so $Y$ and $Y'$ have the same one-dimensional distributions, and by the same arguments as in theorem $4.2$, chapter 4 in \cite{ek}, we have that the affine Dunkl process $Y$ is a Markov process with infinitesimal generator $\A$. By construction, $Y$ is c\`adl\`ag and  lives a.s. in $\R\setminus\Z$, else the radial process $X$ would touch the walls of the principal alcove, which is impossible by proposition (\ref{eqradial}).
\end{proof}


\subsection{Jumps of the affine Dunkl process}

By the construction of the affine Dunkl process, we have the skew-product decomposition
\[
Y_t=w_{\eta_t}\point X_t,
\]
for $t\geq0$.
This shows that there is a jump of the process at time $t$ when the functional $\eta_t$ is equal to one of the $\tau_n$'s. Hence, 
the number of jumps $V_t$ of $(Y_t)_{t\geq0}$ before time $t$, \ie
\[
V_t=\sum_{s\leq t}\1_{\{\Delta Y_s\not=0\}},
\] 
where $\Delta Y_s=Y_s-Y_{s^-}$, is exactly given by the point process 
\[
V_t=\sum_{n\geq1}\1_{\{\eta_t\geq \tau_n\}}.
\]
Since $\eta$ is a well-defined time change, 
we get the following representation of $V$ in term of a time-change Poisson process
\[
V_t=N_{\eta_t},
\]
for all $t\geq0$, where $(N_t)_{t\geq0}$ is the Poisson process considered previously.
Using this representation and the fact that for all $t\geq0$, $\eta_t<+\infty$ a.s., we get immediately  the following proposition.
\begin{prop}
For all $x\in\R\setminus\Z$, and all  $t\geq0$
\[
V_t<+\infty, \quad \text{ $\P_x$-almost surely},
\]
\ie the number of jumps of the affine Dunkl process in a finite time interval  is almost surely finite.
\end{prop}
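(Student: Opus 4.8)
The plan is to read the result directly off the time-changed Poisson representation $V_t = N_{\eta_t}$ established just above, reducing the claim to two elementary facts: that the time change $\eta_t$ is almost surely finite for fixed $t$, and that the driving Poisson process $N$ takes only finite values at finite times.

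First I would recall that, by Proposition~\ref{propsin} and the properties of $\eta$ listed after Definition~\ref{defeta}, the process $\eta$ is for every $x\in\R\setminus\Z$ a $\P_x$-almost surely finite, continuous, increasing process with $\eta_0=0$ and $\eta_t\to+\infty$ as $t\to+\infty$. In particular, for each fixed $t\geq0$ the random variable $\eta_t$ is $\P_x$-a.s. finite. This rests ultimately on the fact that, for $k\geq\frac12$, the radial process $X$ never reaches the walls of its alcove, so that $\int_0^t \frac{ds}{\sin^2(\pi X_s)}<+\infty$ almost surely.

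Next I would invoke the basic pathwise property of the Poisson process $N$. Since $N_t=\sum_{n\geq1}\1_{\{\tau_n\leq t\}}$ with $\tau_n=\sum_{j=1}^n e_j$ a sum of i.i.d. exponential variables of parameter $1$, the strong law of large numbers gives $\tau_n/n\to1$ and hence $\tau_n\to+\infty$ almost surely. Consequently, on an event of probability one, $N_s<+\infty$ for every finite $s\geq0$ simultaneously; equivalently, $N$ has only finitely many jumps in any compact interval.

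Finally I would combine the two facts. Working on the intersection of the two full-probability events above and fixing $t\geq0$, the value $\eta_t$ is a finite (random) number while $N$ is finite at every finite argument, so $V_t=N_{\eta_t}<+\infty$ holds $\P_x$-almost surely. I do not expect a genuine obstacle here: the entire content of the statement has been front-loaded into the identity $V_t=N_{\eta_t}$, and the only point requiring a word of care is that the composition is evaluated at the a.s.-finite time $\eta_t$, so that no blow-up of $N$ can occur before that time.
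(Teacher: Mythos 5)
Your proof is correct and follows the same route as the paper: the identity $V_t=N_{\eta_t}$ together with the $\P_x$-a.s.\ finiteness of $\eta_t$ (which comes from $X$ never hitting the walls, not from Proposition~\ref{propsin}, which only concerns expectations for $k>\frac12$) immediately yields $V_t<+\infty$ a.s. The paper states this in one line; you have merely spelled out the same argument.
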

 
Now define for all $n\geq1$,
\[
T_n=a(\tau_n),
\]
where $a$ is the inverse of $\eta$, \ie $a(t)=\inf\{s\geq0\ |\ \eta_s>t\}$, and $T_0=0$. The sequence $(T_n)_{n\geq1}$ corresponds to  the jump times of the affine Dunkl process. Since $a$ is increasing and $a(t)\to+\infty$ as $t\to+\infty$ a.s., we have that for all $n\geq0$,
\[
T_n>T_{n-1}, \quad \text{ $\P_x$-almost surely},
\]
and $T_n\to+\infty$ when $n\to+\infty$ a.s. Note that we can also define the jump times recursively by
\begin{equation}\label{autretempssaut}
T_n=\inf\left\{t\geq T_{n-1} \big|\ \frac{k\pi^2}{2}\int_{T_{n-1}}^t \frac{ds}{\sin^2(\pi X_s)}>e_n\right\},
\end{equation}
where $(e_n)_{n\geq1}$ is a sequence of independent and identically distributed random variables with exponential distribution of parameter 1. 
Indeed,  define for all $n\geq1$ and all $t\geq0$,
\[
\eta_n(t)=\frac{k\pi^2}{2}\int_{T_{n-1}}^{T_{n-1}+t}\frac{ds}{\sin^2(\pi X_s)},
\]
so $\eta_n$ is a well-defined  time-change with inverse given by
\[
\eta_n^{-1}(t)=\inf\{s\geq 0\, |\, \eta_n(s)>t\},
\]
which is finite for all $t\geq0$, and goes to infinity as $t$ goes to infinity. Then, we can rewrite $T_n$ as
\begin{align*}
T_n&=T_{n-1}+\inf\left\{t\geq0\ |\ \eta_n(t)>e_n\right\}\\
&=T_{n-1}+\eta_n^{-1}(e_n).
\end{align*}
 So,
\[
T_n-T_{n-1}=\eta_n^{-1}(e_n),
\]
which gives $\eta_n(T_n-T_{n-1})=e_n$, and
\[
\eta_{T_n}=\sum_{j=1}^n \eta_j(T_j-T_{j-1})=\sum_{j=1}^n e_j=\tau_n,
\]
or equivalently $T_n=a(\tau_n)$. Note that the fact that the sequence $(T_n)_{n\geq0}$ is well-defined can be proved directly using expression (\ref{autretempssaut}) and the strong Markov property of $(X_t)_{t\geq0}$.

Since $V_t$ is a time-change Poisson process, it is not difficult to exhibit its compensator.
\begin{lem}
Let $V_t=N_{\eta_t}$. The compensator of $V$ is $\eta$, that is
\[
V_t-\eta_t
\]
is a martingale with respect to the filtration $(\mathcal F_{\eta_t}^N)_{t\geq0}$, where $\mathcal F^N_t=\sigma(N_s,s\leq t)$.
\end{lem}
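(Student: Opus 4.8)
The plan is to deduce the statement from the elementary fact that the compensated Poisson process $N_t-t$ is a martingale, combined with the independence of $N$ and $X$ and the optional stopping theorem applied to the time change $\eta$.

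First I would recall that $N$ is a Poisson point process of intensity $1$, so that $(N_t-t)_{t\geq0}$ is a martingale for its natural filtration $(\mathcal F^N_t)$, and that by construction $X$ (hence $\eta$) is independent of $N$. To exploit this, enlarge the filtration by setting $\mathcal G_t=\mathcal F^N_t\vee\sigma(X_s:s\geq0)$. Since the added $\sigma$-field is independent of $N$, the process $(N_t-t)$ remains a $(\mathcal G_t)$-martingale. Moreover, for each fixed $t$ the random variable $\eta_t$ is measurable with respect to $\sigma(X_s:s\geq0)\subseteq\mathcal G_0$, hence is a (trivial) $(\mathcal G_t)$-stopping time, and by the properties established for $\eta$ the family $(\eta_t)_{t\geq0}$ is nondecreasing and a.s. finite.

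Then I would apply Doob's optional sampling theorem to the $(\mathcal G_t)$-martingale $N_\cdot-\mathrm{id}$ along the increasing family of stopping times $(\eta_t)$: this gives that $(N_{\eta_t}-\eta_t)_{t\geq0}$ is a martingale for the time-changed filtration $(\mathcal G_{\eta_t})$, and in particular for its sub-filtration $(\mathcal F^N_{\eta_t})$ once one checks that $N_{\eta_t}-\eta_t$ is adapted to the latter. Equivalently, and perhaps more transparently, one can condition on the whole path of $X$: given $\eta$ (which is then a deterministic nondecreasing continuous function), the increment $N_{\eta_t}-N_{\eta_s}$ is, for $s\leq t$, independent of $\mathcal F^N_{\eta_s}$ and Poisson distributed with parameter $\eta_t-\eta_s$, so that $\E[N_{\eta_t}-\eta_t\mid\mathcal F^N_{\eta_s}]=N_{\eta_s}-\eta_s$; integrating out the independent path of $X$ yields the claim.

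The main obstacle is not the algebra but the careful handling of the filtration and of integrability. On the one hand, one must justify that $\eta_t$, although not an $(\mathcal F^N_t)$-stopping time, can legitimately be treated as a stopping time after enlarging by the independent $\sigma$-field of $X$, and that optional sampling applies; this is where the independence of $N$ and $X$ is essential. On the other hand, optional sampling requires integrability: since $\E_x(N_{\eta_t})=\E_x(\eta_t)$, the process $V_t=N_{\eta_t}$ is genuinely integrable only when $\E_x(\eta_t)<+\infty$, \ie for $k>\frac12$ by Proposition \ref{propsin}; for the boundary case $k=\frac12$ I would localise along the jump times $T_n=a(\tau_n)$ (equivalently stop the Poisson clock at the levels $\tau_n$), on which everything is bounded, and pass to the limit to obtain the (local) martingale property.
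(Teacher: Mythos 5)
Your proposal is correct and follows essentially the same route as the paper: both proofs reduce the claim to the martingale property of the compensated Poisson process $N_t-t$ and apply the optional sampling theorem at the random times $S=\eta_s$, $T=\eta_t$, using the independence of $N$ and $X$. You are in fact more careful than the paper on two points it glosses over — the filtration enlargement needed to make $\eta_t$ a legitimate stopping time, and the integrability issue at $k=\tfrac12$ (where $\E_x(\eta_t)=+\infty$, so the statement really only holds as a local martingale and your localisation along the $\tau_n$ is the right fix).
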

\begin{proof}
Let $0\leq s < t$. Define $T=\eta_t$ and $S=\eta_s$. We have $S<T$, since $t\mapsto \eta_t$ is increasing. Note also that $S$ and $T$ are stopping times with respect to $\mathcal F_t$. Hence, since $N$ is a Poisson process, $N_t-t$ is a martingale and by the optional sampling theorem, we have
\[
\E_x\left(V_t-\eta_t \,|\, \mathcal F_{\eta_s}^N\right)
=\E_x\left(N_T-T \, |\,  \mathcal F_S^N\right)
=N_S-S
=V_s-\eta_s. \qedhere
\]
\end{proof}

Since $\eta$ is the compensator of $V$, we have by proposition \ref{propsin} that 
\[
\E_x(V_t)<+\infty, \quad \text{for $k>\frac{1}{2}$,}
\] 
 and $\E_x(V_t)=+\infty$ for $k=\frac{1}{2}$, that is, the number of jumps of the affine Dunkl process has finite expectation when $k>\frac12$, and infinite expectation for $k=\frac12$.

\subsection{Martingale decomposition} 

First, we remark that $Y$ is a local martingale.
\begin{prop}
The affine Dunkl process $(Y_t)_{t\geq0}$ is a local martingale.
\end{prop}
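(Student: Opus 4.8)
The plan is to show that the affine Dunkl process $(Y_t)_{t\geq0}$ is a local martingale by verifying that the identity function $u(y)=y$ is annihilated by the generator $\A$, and then using a localization argument to handle the fact that $u$ is not in $C^2_b(\R)$. The key observation is that $\A y = k\pi\cot(\pi y) + \tfrac{k}{2}\sum_{p\in\Z}\frac{s_p(y)-y}{(y-p)^2}$, and since $s_p(y)=-y+2p$, the jump term is $\frac{s_p(y)-y}{(y-p)^2}=\frac{-2(y-p)}{(y-p)^2}=\frac{-2}{y-p}$. Thus the jump contribution is $\frac{k}{2}\sum_{p\in\Z}\frac{-2}{y-p}=-k\sum_{p\in\Z}\frac{1}{y-p}=-k\pi\cot(\pi y)$, using the cotangent expansion recalled in the introduction. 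The two terms cancel exactly, so $\A y = 0$.

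The main obstacle is that the martingale problem in the theorem is stated only for $u\in C^2_b(\R)$, whereas $u(y)=y$ is unbounded, so one cannot directly conclude that $u(Y_t)-u(Y_0)-\int_0^t\A u(Y_s)\,ds=Y_t-Y_0$ is a martingale. The way I would handle this is by localization. First I would introduce the stopping times $R_m=\inf\{t\geq0\ |\ |Y_t|\geq m\}$, which tend to $+\infty$ a.s. since $Y$ is càdlàg. On $[0,R_m]$ the process stays bounded, and one can choose functions $u_m\in C^2_b(\R)$ that agree with the identity on the compact interval $[-m,m]$. Applying the martingale property to $u_m$ and stopping at $R_m$, the cancellation $\A u_m(y)=\A y=0$ for $y\in[-m,m]$ gives that $(Y_{t\wedge R_m})_{t\geq0}$ is a martingale for each $m$, whence $Y$ is a local martingale with localizing sequence $(R_m)_{m\geq1}$.

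One must be slightly careful that the jumps of $Y$ can be large, so that $Y_{R_m}$ may overshoot the level $m$; however, since at each jump time the process moves from $Y_{t^-}$ to $s_p(Y_{t^-})=-Y_{t^-}+2p$ with $p$ chosen according to $\sigma^{X_{t^-}}$, and since the number of jumps in any finite interval is a.s. finite (by the preceding proposition), the stopped process $Y_{\cdot\wedge R_m}$ remains integrable. I would verify integrability of $Y_{t\wedge R_m}$ and of the jumps directly from the skew-product representation $Y_t=w_{\eta_t}\cdot X_t$, using that $X_t$ stays in a single alcove and that the jump distribution $\sigma^x$ has finite expectation. With these integrability facts in hand, the optional stopping applied to the $C^2_b$ approximations $u_m$ closes the argument and establishes that $Y$ is a local martingale.
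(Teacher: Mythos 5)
Your central computation is exactly the one the paper uses: writing $\frac{s_p(y)-y}{(y-p)^2}=\frac{-2}{y-p}$ and invoking $\pi\cot(\pi y)=\sum_{p\in\Z}\frac{1}{y-p}$, the drift and jump parts of $\A$ cancel on the identity function; the paper's proof consists of precisely this observation and nothing more. So on the level of the key idea you and the paper agree.

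The gap is in the localization you add, and it is a real one. You justify the integrability of the stopped process by asserting that the jump distribution $\sigma^x$ has finite expectation. It does not: the expected displacement of a single jump from $x$ is
\[
\sum_{p\in\Z}|s_p(x)-x|\,\sigma^x(\{s_p\})=\frac{2\sin^2(\pi x)}{\pi^2}\sum_{p\in\Z}\frac{1}{|x-p|}=+\infty .
\]
The cotangent series converges only through the cancellation between $p$ and $-p$; in absolute value it diverges, and the paper itself warns that the summation order matters. Consequently, if $T_1$ denotes the first jump time, then $\E\bigl[|Y_{T_1}|\,\big|\,\mathcal F_{T_1^-}\bigr]=+\infty$ almost surely; since $\{T_1\le R_m\}\in\mathcal F_{T_1^-}$, one gets $\E\bigl[\sup_{s\le t\wedge R_m}|Y_s|\bigr]=+\infty$ as soon as $\P(T_1\le t\wedge R_m)>0$, so $(Y_{t\wedge R_m})_{t\ge0}$ is not a true martingale and $(R_m)_{m\ge1}$ is not a localizing sequence. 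Spatial truncation alone cannot repair this, because the overshoot at the exit jump is itself non-integrable; any honest completion must exploit the symmetric, principal-value structure of the jump measure (for instance by compensating the jumps to $s_p$ and $s_{-p}$ in pairs, in the spirit of the martingale decomposition at the end of the paper) rather than absolute integrability. Note that the paper's own one-line proof does not address this point either, so the difficulty you ran into is genuine and not an artifact of your write-up.
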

\begin{proof}
Using the formula
\[
\pi \cot(\pi x)=\sum_{n\in\Z}\frac{1}{x-n},
\]
one can see that the function $f(x)=x$ is killed by the generator $\A$ of $(Y_t)_{t\geq0}$, which proves that $Y$ is a local martingale.
\end{proof}

We give now the martingale decomposition of $Y$ into its continuous and purely discontinuous parts. First, recall that the 
L\'evy kernel $N$ of a Markov process describes the distribution of its jumps, see \cite{meyerlevykernel}. For all $x\in\R$, and for a function $f$ in the domain of the infinitesimal generator which vanishes in a neigbourhood of $x$, the L\'evy kernel $N$ of $(Y_t)_{t\geq0}$ is given by
\[
\A f(x)=\int_\R N(x,dy)f(y) .
\]
Hence, by the explicit form of the infinitesimal generator $\A$, we get immediately that
\[
N(x,dy)=\frac{k\pi^2}{2}\sum_{n\in\Z} \frac{\delta_{s_n(x)}(dy)}{(x-n)^2},
\]
for all $x\in\R\setminus\Z$. By \cite{meyerlevykernel}, for all nonnegative measurable function $f$ on $\R^2$, the nonnegative discontinuous functional 
\[
\sum_{s\geq t}f(Y_{s^-},Y_s)\1_{\{\Delta Y_s\not=0\}},
\]
where $\Delta Y_s=Y_s-Y_{s^-}$,
can be compensated by the process
\[
\int_0^t ds \int_\R N(Y_{s^-},dy)f(Y_{s^-},y).
\]

\begin{prop}
We have the following martingale decomposition,
\[
Y_t=Y_0+B_t+M_t,
\]
where $(B_t)_{t\geq0}$ is a standard Brownian motion, and $(M_t)_{t\geq0}$ is a purely discontinuous local martingale which can written as the compensated sum of its jumps:
\[
M_t=-\sum_{s\leq t } \Delta Y_s\1_{\{\Delta Y_s\not=0\}}+\int_0^t k\pi\cot(\pi Y_s)ds.
\]
\end{prop}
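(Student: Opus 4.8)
The plan is to exploit the fact, just proved, that $(Y_t)_{t\geq0}$ is a local martingale, and to split it into its continuous and purely discontinuous local martingale parts. Being a c\`adl\`ag local martingale, $Y$ is a semimartingale and admits the canonical decomposition $Y_t=Y_0+Y_t^c+Y_t^d$, where $Y^c$ is a continuous local martingale with $Y_0^c=0$ and $Y^d$ is a purely discontinuous local martingale (see \cite{revuzyor}). The whole task then reduces to identifying $Y^c$ as a standard Brownian motion and $Y^d$ as the compensated sum of the jumps of $Y$.

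For the continuous part I would use the skew-product representation $Y_t=w_{\eta_t}\point X_t$ together with the explicit structure of the affine Weyl group of type $\widetilde{\text{A}}_1$. On each interval $[T_n,T_{n+1}[$ between two consecutive jump times the element $w_{\eta_t}$ is constant and acts on $\R$ as $x\mapsto\varepsilon x+c$, with $\varepsilon=(-1)^{V_t}\in\{-1,+1\}$ and $c$ an even integer, so that by Proposition \ref{eqradial} one has $dY_t=\varepsilon_t\,dB_t+\varepsilon_t k\pi\cot(\pi X_t)\,dt$ off the jump times. Using the $W$-invariance of $\sin^2(\pi\cdot)$ and the identity $\cot(\pi(\varepsilon x+c))=\varepsilon\cot(\pi x)$, the drift rewrites as $k\pi\cot(\pi Y_t)\,dt$, while the continuous martingale part is $Y_t^c=\int_0^t\varepsilon_s\,dB_s$. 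Since $\varepsilon_s^2=1$, its quadratic variation is $\langle Y^c\rangle_t=\int_0^t\varepsilon_s^2\,ds=t$, and L\'evy's characterization theorem (\cite{revuzyor}) shows that $Y^c$ is a standard Brownian motion, which I rename $B$.

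For the discontinuous part, $Y^d$ has exactly the jumps of $Y$ and is therefore the compensated sum of those jumps, the compensator being read off from the jump part of $\A$, equivalently from the L\'evy kernel $N$, via \cite{meyerlevykernel}. Applying the compensation formula to $f(x,y)=y-x$, I would compute $\int_\R N(x,dy)(y-x)=\frac{k}{2}\sum_{n\in\Z}\frac{s_n(x)-x}{(x-n)^2}$; since $s_n(x)-x=-2(x-n)$, this collapses to $-k\sum_{n\in\Z}\frac1{x-n}=-k\pi\cot(\pi x)$ by the cotangent series. Hence the predictable compensator of $\sum_{s\leq t}\Delta Y_s$ is $-\int_0^t k\pi\cot(\pi Y_s)\,ds$, and the purely discontinuous part is the compensated sum $M_t=Y_t^d=\sum_{s\leq t}\Delta Y_s\1_{\{\Delta Y_s\neq0\}}+\int_0^t k\pi\cot(\pi Y_s)\,ds$, which together with the previous step gives $Y_t=Y_0+B_t+M_t$. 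Note that the drift $\int_0^t k\pi\cot(\pi Y_s)\,ds$ produced in the continuous computation is precisely minus the compensator of the jumps, so it is absorbed into $M$ rather than surviving as a drift, consistently with $Y$ being a local martingale.

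The delicate point is the convergence issue already flagged after the cotangent expansion: the series $\sum_{n\in\Z}\frac{|s_n(x)-x|}{(x-n)^2}=2\sum_{n\in\Z}\frac1{|x-n|}$ diverges, so the jumps of $Y$ have infinite expected absolute size and $\sum_{s\leq t}\Delta Y_s$ is not summable in $L^1$; the compensator converges only conditionally, which is exactly why $M$ is a local and not a true martingale and why the principal-value expression $k\pi\cot(\pi Y_s)$ is the correct compensator. To make this rigorous I would localize: on $[0,t]$ the radial process stays $\P_x$-almost surely in a compact subinterval of its alcove (Proposition \ref{eqradial}), so $\cot(\pi X_s)$, hence $\cot(\pi Y_s)=\varepsilon_s\cot(\pi X_s)$, is bounded and $\int_0^t k\pi\cot(\pi Y_s)\,ds$ is a.s. finite; combined with the a.s. finiteness of the number of jumps (and $\E_x(V_t)<+\infty$ for $k>\frac12$ from Proposition \ref{propsin}) this ensures that $M$ is a well-defined c\`adl\`ag process and that the compensation identity holds after stopping along a localizing sequence keeping $Y$ away from $\Z$. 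This conditional-convergence obstacle is the main difficulty; the rest is the identification of the two martingale parts described above.
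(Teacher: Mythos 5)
Your argument follows the same route the paper intends --- the paper gives no details here and simply defers to Gallardo--Yor, citing It\^o's formula and the theory of the L\'evy kernel --- and your identification of the two parts (It\^o between the jump times via the skew-product $Y_t=w_{\eta_t}\point X_t$, L\'evy's characterization for $\int_0^t\varepsilon_s\,dB_s$, and the L\'evy-kernel compensation for the jumps) is essentially correct. Two caveats. First, you obtain $M_t=+\sum_{s\leq t}\Delta Y_s\1_{\{\Delta Y_s\neq0\}}+\int_0^t k\pi\cot(\pi Y_s)\,ds$ with a plus sign, while the proposition displays a minus sign; your sign is the consistent one (continuity of $B$ forces $\Delta M_s=\Delta Y_s$, which the displayed formula violates), and the kernel you implicitly use, $N(x,dy)=\frac{k}{2}\sum_{n}(x-n)^{-2}\delta_{s_n(x)}(dy)$, is the one actually read off from $\A$ (the factor $\pi^2$ in the paper's display of $N$ is spurious); you should say explicitly that you are proving a corrected version of the stated formula rather than silently changing the sign. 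Second, the integrability issue you flag is genuine, but your localization does not resolve it: since $\sum_{n}|s_n(x)-x|\,(x-n)^{-2}=2\sum_n|x-n|^{-1}=+\infty$, a single jump already has infinite conditional first absolute moment, so $\sum_{s\leq t}|\Delta Y_s|$ fails to be locally integrable no matter how you confine $X$ inside a compact subinterval of its alcove --- that controls the drift $\cot(\pi Y_s)$, not the size of the jumps --- and the compensator $-k\pi\cot(\pi Y_s)$ exists only as the symmetric (principal-value) summation. The clean fix is already contained in your first step: set $M:=Y-Y_0-\int_0^\cdot\varepsilon_s\,dB_s$, which is a local martingale by the preceding proposition, has finite variation on compacts (finitely many jumps plus an absolutely continuous part), hence is purely discontinuous, and whose explicit expression follows from the between-jump dynamics; the L\'evy-kernel computation then serves only to interpret $M$ a posteriori as a compensated sum of jumps in the principal-value sense.
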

The proof use It\^o's formula and the theory of L\'evy kernel and is exactly the same as in the classical case of Dunkl processes, so we refer to \cite{gallardoyor2} for more details.


\newcommand{\etalchar}[1]{$^{#1}$}
\providecommand{\bysame}{\leavevmode\hbox to3em{\hrulefill}\thinspace}
\providecommand{\MR}{\relax\ifhmode\unskip\space\fi MR }
\providecommand{\MRhref}[2]{%
  \href{http://www.ams.org/mathscinet-getitem?mr=#1}{#2}
}
\providecommand{\href}[2]{#2}

\end{document}